\renewcommand{\subsection}{\@startsection
{subsection}{2}{0mm}{\baselineskip}{-0.25cm}
{\normalfont\normalsize\em}}
\newtheorem{theorem}{Theorem}[section]
\newtheorem{proposition}[theorem]{Proposition}
\newtheorem{corollary}[theorem]{Corollary}
\newtheorem{lemma}[theorem]{Lemma}
\theoremstyle{definition}
\newtheorem{example}[theorem]{Example}
\theoremstyle{remark}
\newtheorem{remark}[theorem]{Remark}
\def\1{\mathbf 1}
\def\F{\mathbf F}
\def\PP{\mathbf P}
\def\T{\mathbf T}
\def\bZ{\mathbb Z}
\def\Ap{{\rm Ap}}
\def\Kunz{{\rm Kunz}}
\def\N {\mathbb{N}}
\def\C {\mathbb{C}}
\def\b {\beta}
\def\k {\kappa}
\def\l {\ell}
\title[Gapsets and the $k$-generalized Fibonacci sequences]{Gapsets and the $k$-generalized Fibonacci sequences}
\author[G. B. Almeida Filho]{Gilberto B. Almeida Filho}
\address{Instituto Federal de São Paulo - Campus São Paulo, São Paulo, SP, Brazil}
\email{gbrito.af19@gmail.com}
\author[M. Bernardini]{Matheus Bernardini}
\address{Faculdade do Gama, Universidade de Bras\'{i}lia, Bras\'{i}lia, DF, Brazil}
\email{matheusbernardini@unb.br}
\thanks{{\em 2020 Math. Subj. Class.}: Primary 20M14; 
Secondary 05A15, 05A19}
\thanks{{\em Keywords}: numerical semigroup, gapset, $m$-extension, Kunz coordinates, Bras-Amorós' conjecture}
\begin{document}


\begin{abstract} 
In this paper, we bring the terminology of the Kunz coordinates of numerical semigroups to gapsets and we generalize this concept to $m$-extensions. It allows us to identify gapsets and, in general, $m$-extensions with tilings of boards. As a consequence, we prove a version of Bras-Amor\'{o}s conjecture for $m$-extensions. Besides, we obtain a lower bound for the number of gapsets with fixed genus and depth at most 3 and a family of upper bounds for the number of gapsets with fixed genus. Moreover, we present explicit formulas for the number of gapsets with fixed genus and depth, when the multiplicity is 3 or 4, and, in some cases, for the number of gapsets with fixed genus and depth.
\end{abstract}

\maketitle

\section{Introduction}\label{s1}

A \textit{numerical semigroup} is a cofinite submonoid of the set of non-negative integers, $\N_0$, equipped with the usual addition (see \cite{GS-R} for a background). The complement of a numerical semigroup $S$ is called the \textit{set of gaps} of $S$ and it is denoted by $G(S)$; its cardinality is the \textit{genus} of $S$, which is denoted by $g(S)$. Throughout this paper, we denote by $\N$ the set of positive integers, by $\bZ$ the set of integers, by $[a,b] := \{x \in \bZ: a \leq x \leq b\}$, for integers $a$ and $b$ and for a real number $x$, $\lceil x \rceil$ and $\lfloor x \rfloor$ are the smallest integer greater than or equal to $x$
and the biggest integer smaller than or equal to $x$, respectively. Some other invariants play a role in the theory and we introduce them as follows: the \textit{multiplicity}, the \textit{conductor} and the \textit{depth} of a numerical semigroup are \linebreak $m(S) := \min\{s \in S: s \neq 0\}$, $c(S) := \min\{s \in S: s + n \in S, \forall n \in \N_0\}$ and $q(S) := \left \lceil \frac{c(S)}{m(S)} \right \rceil$, respectively. A central problem in numerical semigroup theory is the problem of counting numerical semigroups by genus (see \cite{Kaplan2} for a survey on this subject). One can prove that if $S$ is a numerical semigroup of genus $g$, then $S$ contains every integer greater than $2g - 1$; hence, the number of numerical semigroups of genus $g$, namely $n_g$, is finite. The sequence $(n_g)$ is registered as A007323 at OEIS (on-line encyclopedia of integer sequences) and its first few elements are $1, 1, 2, 4, 7, 12, 23, 39, 67, 118$. Bras-Amorós \cite{Amoros1} conjectured the following items about the sequence $(n_g)$:
\begin{enumerate}
  \item $\lim_{g \to \infty} \frac{n_{g+1}}{n_g} = \varphi = \frac{1+\sqrt{5}}{2}$ (golden ratio);  
  \item $\lim_{g \to \infty} \frac{n_{g+1} + n_g}{n_{g+2}} = 1$;
  \item $n_g + n_{g+1} \leq n_{g+2}$, for all $g$.
\end{enumerate}

Zhai \cite{Zhai} gave a proof for the first two items; the key ingredient is that if $n'_g$ is the number of numerical semigroups of genus $g$ and depth at most 3, then $\lim_{g \to \infty} \frac{n'_{g}}{n_g} = 1$, which was a conjecture proposed by Zhao \cite{Zhao}. The third item remains as an open problem, as well as deciding if the sequence $(n_g)$ is non-decreasing.

Bras-Amorós \cite{Amoros} proved that $2\F_g \leq n_g \leq 1+3 \cdot 2^{g-3}$, for all $g \geq 3$, where $(\F_n)$ is the Fibonacci sequence given by $\F_0 = 0, \F_1 = 1$ and $\F_n = \F_{n-1} + \F_{n-2}$, for all $n \geq 2$. We observe that the lower bound is interesting because $2\F_g$ and $n_g$ are asymptotic to $c_1 \varphi^g$ and $c_2 \varphi^g$, respectively, where $\varphi$ is the golden ratio and $c_1$ and $c_2$ are constants. Some authors studied the numbers $n_g$ by taking partitions of set  of numerical semigroups of genus $g$, where a second invariant was considered (see \cite{GM, MF, BR, Amoros3}). In particular, some authors considered the multiplicity of numerical semigroups as the second invariant. Kaplan \cite{Kaplan} obtained important results and the Apéry set and the Kunz coordinates play important role in the constructions. García-Sánchez \textit{et al.} \cite{RPA} provided formulas for the number of numerical semigroups with fixed genus and multiplicities $3, 4$ and $5$; in particular, they partially verified a conjecture proposed by Kaplan \cite{Kaplan}. Karakas \cite{Karakas} has also used the Kunz coordinates of a numerical semigroup to parametrize the numerical semigroups of genus $g$ and multiplicity at most $5$. Eliahou and Fromentin \cite{EF2} used the so-called gapset filtrations to construct injective maps from the set of numerical semigroups of genus $g$ and multiplicity $m$ to the set of numerical semigroups of genus $g+1$ and multiplicity $m$, where $m = 3$ or $m = 4$.

It is important to observe that the problem of counting numerical semigroups by genus can be handled looking, exclusively, at the set of gaps of numerical semigroups; thereby, the concept of \textit{gapset} was formally introduced by Eliahou and Fromentin \cite{EF} and, in brief, it is the set of gaps of some numerical semigroup. However, it can be treated independently. A \textit{gapset} is a finite set $G \subset \N$ satisfying the following property: let $z \in G$ and write $z = x + y$, with $x$ and $y \in \N$; then $x \in G$ or $y \in G$. Naturally, all the terminology used for numerical semigroups can be transferred for gapsets (genus, multiplicity, conductor, depth, etc.). The main result of their paper is that $n'_{g} + n'_{g+1} \leq n'_{g+2} \leq n'_{g-1} + n'_{g} + n'_{g+1}$ holds true, for all $g \in \N$, which is remarkable, specially due to Zhai's result (cited above). In the same paper, Eliahou and Fromentin also introduced the notion of $m$-extension, for $m \in \N$, $m>1$. An $m$\textit{-extension} $A \subset \N$ is a finite set containing $[1,m-1]$ that admits a partition $A = A_0 \cup A_1 \cup \ldots \cup A_t$, for some $t \in \N_0$, where $A_0 = [1,m-1]$ and $A_{i+1} \subseteq m + A_i$ for all $i$. In particular, if $A$ is an $m$-extension, then $A \cap m\N = \emptyset$. Moreover, every gapset with multiplicity $m$ is an $m$-extension, but the converse does not hold true. We define the \textit{genus}, the \textit{conductor} and the \textit{depth} of an $m$-extension $A$ as $g(A) := \# A$, $c(A) := \min\{s \in \N_0: s + n \notin A, \forall n \in \N_0\}$ and $q(A) := \left\lceil\frac{c(A)}{m}\right\rceil$, respectively. In particular, one can prove that the number $t$ in the definition of an $m$-extension coincides with $q - 1$, where $q$ is its depth.

Recently, Bacher \cite{Bacher} considered a new approach to deal with numerical semigroups of genus $g$ and depth $q$ and Zhu \cite{Zhu} studied numerical semigroups of genus $g$ and depth at most $3$, by dealing with their Kunz coordinates.

Here is an outline of this paper. In section 2, we present some general properties of gapsets and of the $k$-generalized Fibonacci sequences. In section 3, we bring the terminology of Apéry set and Kunz coordinates to gapsets and we generalize those notions to $m$-extensions; moreover, for given $m \in \N, m > 1$, we construct a bijective map between the set of $m$-extensions and $\N^{m-1}$. In section 4, we introduce a bijective map $\sigma$ between the set of all $m$-extensions of genus $g$ and the set of tilings of a $g$-board, which allows us to identify gapsets with tiling of boards; in particular we prove the Bras-Amorós conjecture for $m$-extensions. In section 5, we present a lower bound for the number $n'_g$ which depends on the Fibonacci and Padovan sequences. In section 6, we provide a sequence of upper bounds for $n_g$, which depend on the $k$-generalized Fibonacci sequences and the number of gapsets with fixed genus, depth and multiplicity. In section 7, we compute the number of gapsets of genus $g$, depth $q$ and multiplicity 3 or 4, for all values of $g$ and $q$; moreover, we compute the number of gapsets with given genus and depth for some specific values.

\section{Auxiliary Results}

In this section, we present some well known results about gapsets, Fibonacci sequence, $k$-generalized Fibonacci sequences and Padovan sequence as well as some constructions that will be important in this paper. The first result is classical in gapset theory and it relates the genus of gapset with its depth.

\begin{proposition}
If a non-empty gapset has genus $g$ and depth $q$, then $1 \leq q \leq g$.
\label{dep}
\end{proposition}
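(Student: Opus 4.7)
The bound $q \geq 1$ will be immediate: since $G \neq \emptyset$, one must have $1 \in G$ (otherwise the smallest element $z$ of $G$ would satisfy $z > 1$, and the gapset axiom applied to $z = 1 + (z-1)$ with $z-1 \notin G$ would force $1 \in G$, a contradiction), so the conductor satisfies $c \geq 2$ and hence $q = \lceil c/m \rceil \geq 1$.

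For $q \leq g$, my plan is to partition the window $[1, qm]$, which contains every gap since $c \leq qm$, into the $q$ consecutive blocks $I_i = [im+1, (i+1)m]$ for $i = 0, 1, \ldots, q-1$, and prove that each block contains at least one gap; since the blocks are pairwise disjoint, this will yield $g \geq q$ at once. Block $I_0$ already contains the gap $1$. For $1 \leq i \leq q-1$, I would argue by contradiction: if $I_i \cap G = \emptyset$, then $[im+1, (i+1)m] \subseteq S := \N_0 \setminus G$; using that $m \in S$ and that $S$ is closed under addition (a direct consequence of the gapset axiom), an induction on translation by $m$ shows $[im+1, \infty) \subseteq S$, whence $c \leq im+1$ and therefore $q \leq i+1$. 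For $i \leq q-2$ this collapses to $q \leq q-1$, the desired contradiction.

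The remaining case $i = q-1$ is more delicate, as the resulting inequality $q \leq q$ is vacuous. Here the same hypothesis gives $c \leq (q-1)m + 1$, which combined with $c > (q-1)m$ (from the ceiling definition of $q$) would force $c = (q-1)m + 1$, so that the largest gap $c-1$ equals $(q-1)m$. But $(q-1)m$ is a positive multiple of $m$ when $q \geq 2$ and hence lies in $S$, a contradiction. The case $q = 1$ is trivial since a non-empty gapset has genus at least $1$. This boundary case is the main obstacle in the argument: bridging the gap between the generic contradiction, which works only for $i \leq q-2$, and the corner situation where one must rule out the configuration $c \equiv 1 \pmod{m}$ with a gap-free last block.
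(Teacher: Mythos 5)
Your proof is correct, but it follows a genuinely different route from the paper. The paper simply recalls the classical bounds $2 \leq m \leq g+1$ and $g+1 \leq c \leq 2g$ for a non-empty gapset and sandwiches $\frac{c}{m}$ between $1$ and $g$, so that $q = \left\lceil \frac{c}{m} \right\rceil \in [1,g]$ — a two-line argument resting on facts it does not reprove. You instead work directly from the gapset axiom: you show $1 \in G$ (giving $q \geq 1$), and you prove that each of the $q$ blocks $[im+1,(i+1)m]$, $0 \leq i \leq q-1$, must contain a gap, using closure of $S = \N_0 \setminus G$ under addition to propagate a gap-free block to all of $[im+1,\infty)$ and then deriving a contradiction — including the careful treatment of the boundary block $i = q-1$ via $c = (q-1)m+1$ and the fact that the largest gap $c-1$ cannot be a multiple of $m$. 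Your argument is longer but self-contained and actually establishes a stronger structural fact, namely that every block below the conductor meets $G$ (equivalently, every level $A_i$, $i \leq q-1$, of the canonical partition of a gapset is non-empty), which is the idea underlying gapset filtrations; the paper's proof buys brevity at the cost of invoking the standard inequalities for $m$ and $c$ in terms of $g$.
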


\begin{proof}
Recall that if $G$ is a non-empty gapset of genus $g$, multiplicity $m$ and conductor $c$, then $2 \leq m \leq g+1$ and $g+1 \leq c \leq 2g$. Combining those inequalities, we obtain
$$1 = \frac{g+1}{g+1} \leq \frac{c}{m} \leq \frac{2g}{2} = g.$$
Hence, $1 \leq q \leq g$.
\end{proof}

Here, we point out that the \textit{ordinary gapset} $[1, g]$ attains the lower bound, the \textit{hyperelliptic gapset} $(2\N + 1) \cap [1,2g-1]$ attains the upper bound and the only gapset with depth $0$ is the empty gapset. A natural question that arises is the following: for each $k \in [1,g]$, is there a gapset of genus $g$ and depth $k$? We deal with this question in section 6.

The next result guarantees that every $m$-extension with depth at most 2 is a gapset.

\begin{proposition}
Let $G \subseteq [1, 2m-1]$, with $[1,m-1] \subseteq G$ and $m \notin G$. Then $G$ is a gapset with multiplicity $m$ and depth at most $2$.
\label{q2}
\end{proposition}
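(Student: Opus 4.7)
The plan is to verify, in order, the three assertions packed into the statement: that $G$ is a gapset, that its multiplicity is $m$, and that its depth is at most $2$. Two of these are essentially immediate from the definitions. Since $[1,m-1]\subseteq G$ and $m\notin G$, the element $m$ is the smallest positive non-gap, so $m(G)=m$. For the depth, since $G\subseteq[1,2m-1]$, every integer $s\geq 2m$ satisfies $s+n\notin G$ for all $n\in\N_0$; hence $c(G)\leq 2m$ and therefore $q(G)=\lceil c(G)/m\rceil\leq 2$.

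The substantive step is the gapset property: for every $z\in G$ and every decomposition $z=x+y$ with $x,y\in\N$, at least one of $x,y$ must lie in $G$. I would split on the two possible locations of $z$ within $[1,2m-1]$, noting that $z=m$ is excluded by hypothesis. If $z\in[1,m-1]$, then $x,y\geq 1$ forces $x,y\leq z-1\leq m-2$, so both summands automatically lie in $[1,m-1]\subseteq G$. If $z\in[m+1,2m-1]$, then the inequality $x+y=z<2m$ prevents both $x$ and $y$ from being $\geq m$; so at least one of them, say $x$, satisfies $x\leq m-1$ and therefore lies in $[1,m-1]\subseteq G$.

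There is no real obstacle here; the only points that require a moment of care are treating $z=m$ separately (vacuously, since $m\notin G$) and, in the second case, using $x+y<2m$ to rule out the possibility that both summands exceed $m-1$. The entire argument exploits only the smallness of the ambient window $[1,2m-1]$ together with the stated hypothesis that $[1,m-1]\subseteq G$ while $m\notin G$.
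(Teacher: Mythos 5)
Your proof is correct and follows essentially the same route as the paper: the key observation in both is that since $z\leq 2m-1$, at least one summand in any decomposition $z=x+y$ is at most $m-1$ and hence lies in $[1,m-1]\subseteq G$, and the depth bound comes from $c(G)\leq 2m$. Your version merely adds the (immediate) verification of the multiplicity and replaces the paper's ``take $x\leq y$'' shortcut with an explicit case split, which changes nothing essential.
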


\begin{proof}
Let $z \in G$ and write $z = x + y$, with $x \leq y$. Since $z \leq 2m-1$, then $x \leq m-1$ and we conclude that $x \in G$; hence $G$ is a gapset. Moreover, the conductor of $G$ is at most $2m - 1$; thus, $q(G) \leq 2$.
\end{proof}

Recall that the Fibonacci sequence $(\F_n)$ is given by the recurrence relation \linebreak $\F_n = \F_{n-1} + \F_{n-2}$, for all $n \geq 2$ with initial values $\F_0 = 0$ and $\F_1 = 1$ (registered as A000045 at OEIS).

There are some generalizations of this well known sequence and in this paper we deal with the $k$-generalized Fibonacci sequences, namely $(\F^{(k)}_n)$, where $k \geq 2$ and $n \geq -k+2$. The recurrence relation is given by $\F^{(k)}_n = \sum_{i = 1}^k \F^{(k)}_{n-i}$ and the initial values are given by $\F^{(k)}_1 = 1$ and $\F^{(k)}_i = 0$, for $i \in [-k+2, 0]$. There is a formula for the first few elements of a $k$-generalized Fibonacci sequence and we present it as follows.

\begin{lemma}
Let $k \geq 2$ be an integer. Then $\F^{(k)}_{n} = 2^{n-2}$ for all $n \in [2,k+1]$.
\label{inicFibo}
\end{lemma}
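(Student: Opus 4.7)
The plan is to prove this by a straightforward induction on $n$, using the fact that for $n \le k+1$ the recurrence effectively loses all of its ``zero'' terms.

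First, I would unwind the recurrence for $n$ in the stated range. Writing the recurrence as $\F^{(k)}_n = \sum_{j=n-k}^{n-1} \F^{(k)}_j$, the key observation is that when $n \in [2, k+1]$ we have $n - k \le 1$, so every index $j$ with $j \le 0$ in the sum contributes $0$ by the initial conditions $\F^{(k)}_i = 0$ for $i \in [-k+2, 0]$. Therefore for $n$ in this range the recurrence simplifies to
\[
\F^{(k)}_n = \sum_{j=1}^{n-1} \F^{(k)}_j.
\]

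Next, I would run induction on $n \in [2, k+1]$. The base case $n = 2$ gives $\F^{(k)}_2 = \F^{(k)}_1 = 1 = 2^{0}$, matching the claim. For the inductive step, assume $\F^{(k)}_j = 2^{j-2}$ for every $j \in [2, n-1]$, where $n \le k+1$. Then using the simplified recurrence and $\F^{(k)}_1 = 1$,
\[
\F^{(k)}_n = \F^{(k)}_1 + \sum_{j=2}^{n-1} \F^{(k)}_j = 1 + \sum_{j=2}^{n-1} 2^{j-2} = 1 + (2^{n-2} - 1) = 2^{n-2},
\]
which closes the induction.

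There is no real obstacle here; the only subtle point is making sure the range of summation in the recurrence actually makes the negatively-indexed terms irrelevant, which is precisely why the upper bound $n \le k+1$ appears in the statement. Indeed, at $n = k+2$ one extra term $\F^{(k)}_1 = 1$ would sneak into the sum that is not accounted for by the doubling pattern, so the formula breaks there, consistent with the stated range.
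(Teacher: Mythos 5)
Your proof is correct and takes essentially the same route as the paper: induction on $n$ with base case $n=2$, using that the negatively-indexed initial values vanish so that the recurrence collapses to $\F^{(k)}_n = 1 + \sum_{j=2}^{n-1} 2^{j-2} = 2^{n-2}$. (One small quibble with your closing aside, which is not part of the proof proper: at $n = k+2$ the formula fails because the term $\F^{(k)}_1 = 1$ drops \emph{out} of the sliding window of the recurrence, giving $2^{k}-1$ rather than $2^{k}$, not because an extra term sneaks in.)
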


\begin{proof}
We proceed by induction on $n$. Notice that $\F^{(k)}_{2} = \F^{(k)}_{1} = 1 = 2^0$. Let $n \in [2, k+1]$ and assume that $\F^{(k)}_{j} = 2^{j-2}$, for all $j \in [2, n-1]$. Then 
$$\F^{(k)}_{n} = \sum_{j=1}^{k} \F^{(k)}_{n-j} = 1 + \sum_{j=1}^{n-2} 2^{n-j-2} = 2^{n-2}$$ 
and we are done.
\end{proof}

If $k$ is an integer such that $k \geq 2$, the $k$-generalized Fibonacci sequence can be defined in an alternative way and we summarize it as follows. For a positive integer $g$, a $g$-composition is a sequence $(\b_1, \b_2, \ldots, \b_n)$, where $\b_i$ is a positive integer for all $i$ and $\sum \b_i = g$. A visual interpretation for a $g$-composition is considering a $g$-board (board $1 \times g$ with 1 row and $g$ columns) and associating an element $\b_i$ of that sequence with a rectangle $1 \times \b_i$; thus we can tile the $g$-board with the small rectangles $1 \times \b_1, 1 \times \b_2, \ldots, 1 \times \b_n$ without overlapping. Throughout this paper, we choose  to consider this visual interpretation and we denote the set of tilings of a $g$-board by $\mathcal{C}(g)$ and the set of tilings of a $g$-board with parts belonging to $\{1 \times 1, 1 \times 2, \ldots, 1 \times k\}$ by $\mathcal{C}(g, k)$. An element of $\mathcal{C}(g)$ will be denoted by its associated $g$-composition. 

\begin{example}
The tiling $1 \times 4, 1 \times 1$ of a $5$-board is denoted by $(4,1)$ and Figure \ref{tiling} gives a visual interpretation of it.
\begin{center}
\begin{figure}
\includegraphics[scale=0.8]{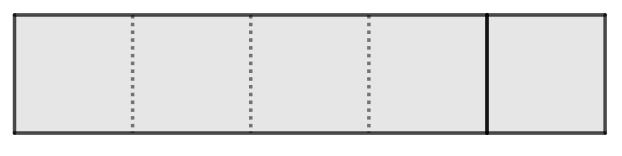}
\caption{The tiling $(4,1)$ of a $5$-board}
\label{tiling}
\end{figure}
\end{center}
\end{example}

A well known result is that the $k$-generalized Fibonacci number of order $n$ is given by

\begin{equation}
\F^{(k)}_{g+1} = \# \mathcal{C}(g, k),
\label{Cgk}
\end{equation}
that can be proved by induction on $g$, when $k$ is fixed.

As a consequence of Lemma \ref{inicFibo}, we have that $\F^{(g)}_{g+1} = 2^{g-1}$ and we obtain a formula for the number of tilings of a $g$-board.

\begin{proposition}
We have that $\# \mathcal{C}(g) = 2^{g-1}$.
\label{Cg}
\end{proposition}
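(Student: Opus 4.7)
The plan is to derive this directly from Lemma~\ref{inicFibo} together with the identity~(\ref{Cgk}). First I would observe that any tile in a tiling of a $g$-board has length at most $g$, so there is no loss in restricting tile sizes to $\{1\times 1, 1\times 2, \ldots, 1\times g\}$; that is, $\mathcal{C}(g) = \mathcal{C}(g, g)$. Hence $\#\mathcal{C}(g) = \#\mathcal{C}(g, g)$.

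Next I would apply identity~(\ref{Cgk}) with $k = g$ to get $\#\mathcal{C}(g, g) = \F^{(g)}_{g+1}$. Since $g+1$ lies in the interval $[2, g+1]$, Lemma~\ref{inicFibo} applies with $n = g+1$ and $k = g$ and yields $\F^{(g)}_{g+1} = 2^{(g+1)-2} = 2^{g-1}$. Chaining these equalities gives $\#\mathcal{C}(g) = 2^{g-1}$, as claimed.

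I do not anticipate a serious obstacle here: the only small point to verify is that cutting off tile sizes at $g$ truly loses nothing (which is immediate from $\sum \beta_i = g$ and $\beta_i \geq 1$), and that Lemma~\ref{inicFibo} is indeed usable in the boundary case $n = k+1$, which it is. As a sanity check one could also give a direct bijective argument: a tiling of the $g$-board is determined by choosing, for each of the $g-1$ internal vertical lines, whether it is a cut between two consecutive tiles or lies strictly inside a single tile, producing an obvious bijection between $\mathcal{C}(g)$ and the subsets of $[1, g-1]$; this second route confirms the count $2^{g-1}$ independently of the Fibonacci machinery.
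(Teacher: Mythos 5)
Your argument is correct and follows essentially the same route as the paper: identify $\mathcal{C}(g)$ with $\mathcal{C}(g,g)$, apply identity~(\ref{Cgk}) with $k=g$, and conclude via Lemma~\ref{inicFibo} at the boundary case $n=k+1=g+1$. The extra bijective sanity check via the $g-1$ internal cut positions is a pleasant independent confirmation, but the main proof matches the paper's.
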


\begin{proof}
Let $C$ be a tiling of a $g$-board. Then the parts of $C$ belong to $\{1 \times 1, 1 \times 2, \ldots, 1 \times g\}$. From (\ref{Cgk}), $\# \mathcal{C}(g) = \F^{(g)}_{g+1}$ and the formula provided in Lemma \ref{inicFibo} concludes the proof.
\end{proof}

Another sequence that is important in this paper is the so-called Padovan sequence. It can be defined as the sequence $(\PP_n)$ such that $\PP_n = \PP_{n-2} + \PP_{n-3}$ for all $n \geq -3$ with initial values $\PP_{-3} = 1$ and $\PP_{-2} = \PP_{-1} = 0$. The first few elements of this sequence are $1, 0, 0, 1, 0, 1, 1, 1, 2, 2, 3, 4, 5, 7, 9, 12, 16, \ldots$ and it is registered as A000931 in OEIS. An interpretation for the number $\PP_n$, when $n$ is a positive integer, is the number of $n$-compositions with parts $2$ or $3$. For instance, $\PP_7 = 3$, since there are three $7$-compositions with parts $2$ and $3$ (3 + 2 + 2, 2 + 3 + 2, 2 + 2 + 3). Notice that we can also give the visual interpretation for the number $\PP_7$ as the correspondent tiling of a $7$-board. A useful relation between the Fibonacci sequence and the Padovan sequence for this paper is the following.

\begin{proposition}
Let $(\F_n)$ and $(\PP_n)$ be the Fibonacci and the Padovan sequences, respectively. Then

$$\displaystyle \sum_{n = -3}^{g-3} \PP_n \cdot \F_{g - 2 - n} = \F_{g+2} - \PP_{g+1}, \forall g \in \N_0.$$
\label{Padovan}
\end{proposition}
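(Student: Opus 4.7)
The plan is to prove the identity by induction on $g$, by exhibiting a common linear recurrence satisfied by both sides. Let $S_g$ denote the left-hand side and set $R_g := \F_{g+2} - \PP_{g+1}$. I will show that for every $g \geq 2$,
\[
S_g = S_{g-1} + S_{g-2} + \PP_{g-3} \quad \text{and} \quad R_g = R_{g-1} + R_{g-2} + \PP_{g-3},
\]
and close the induction by checking the two base cases $g = 0, 1$ directly.

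For the left-hand side, I would isolate in $S_g$ the last term $n = g-3$, which contributes $\PP_{g-3}\F_1 = \PP_{g-3}$. For every $n \leq g-4$ the Fibonacci recurrence gives $\F_{g-2-n} = \F_{g-3-n} + \F_{g-4-n}$; summing this identity weighted by $\PP_n$ produces
\[
\sum_{n=-3}^{g-4} \PP_n \F_{g-3-n} \;+\; \sum_{n=-3}^{g-4} \PP_n \F_{g-4-n}.
\]
The first sum is exactly $S_{g-1}$. The second equals $S_{g-2}$ after the harmless adjunction of the $n = g-4$ term $\PP_{g-4}\F_0$, which vanishes because $\F_0 = 0$. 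This yields the stated recurrence for $S_g$.

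For the right-hand side, applying the Fibonacci recurrence to $R_{g-1} + R_{g-2}$ gives $\F_{g+2} - (\PP_g + \PP_{g-1})$, so matching $R_g$ reduces to the auxiliary identity $\PP_g + \PP_{g-1} = \PP_{g+1} + \PP_{g-3}$. This follows from a short manipulation of the Padovan recurrence: since $\PP_{g+1} = \PP_{g-1} + \PP_{g-2}$ and $\PP_g = \PP_{g-2} + \PP_{g-3}$, subtracting yields $\PP_g + \PP_{g-1} - \PP_{g+1} = \PP_{g-3}$, as required.

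The base cases are routine: for $g = 0$, both sides equal $\PP_{-3}\F_1 = 1 = \F_2 - \PP_1$; for $g = 1$, both equal $1 = \F_3 - \PP_2$. The only step with any content is spotting the auxiliary Padovan identity $\PP_g + \PP_{g-1} = \PP_{g+1} + \PP_{g-3}$, and even that is a one-line consequence of the defining recurrence; I do not expect a genuine obstacle.
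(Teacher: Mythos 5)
Your argument is correct and is essentially the paper's proof in a slightly different packaging: the paper runs a direct (strong) induction, splitting the sum for $g+1$ via the Fibonacci recurrence into the sums for $g$ and $g-1$ plus the term $\PP_{g-2}$, and then applies the same Padovan manipulation you use, whereas you phrase this as both sides satisfying the common recurrence $X_g = X_{g-1} + X_{g-2} + \PP_{g-3}$ with two matching initial values. The decomposition of the sum and the auxiliary identity $\PP_g + \PP_{g-1} = \PP_{g+1} + \PP_{g-3}$ are the same ingredients, so no substantive difference.
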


\begin{proof}
We proceed by induction on $g$. If $g = 0$, then $\PP_{-3} \cdot \F_{1} = 1 = \F_{2} - \PP_{1}$. Let $g \in \N_0$ and suppose that the formula holds true for all $i \in [0, g]$. We shall compute $S = \sum_{n = -3}^{g-2} \PP_n \cdot \F_{g - 1 - n}$. Since $\F_{g - 1 - n} = \F_{g-2-n} + \F_{g-3-n}$ for all $n \in [-3, g-3]$, then 

\begin{eqnarray*}
S & = & \PP_{g-2} \cdot \F_{1} + \sum_{n = -3}^{g-3} \PP_n \cdot \F_{g - 2 - n} + \sum_{n = -3}^{g-4} \PP_n \cdot \F_{g - 3 - n} + \PP_{g-3} \cdot \F_0 \\
  & = & \PP_{g-2} + (\F_{g+2} - \PP_{g+1}) + (\F_{g+1} - \PP_{g}) \\
  & = & (\F_{g+2} + \F_{g+1}) - (\PP_{g} + \underbrace{\PP_{g+1} - \PP_{g-2}}_{\PP_{g-1}}) = \F_{g+3} - \PP_{g+2}
\end{eqnarray*}
and the result follows.
\end{proof}

\section{Generalization of the Ap\'{e}ry set and the Kunz coordinates for $m$-extensions}

In this section, we deal with the Ap\'{e}ry set and the Kunz coordinates of a gapset $G$. This definition arises in a natural way, when we consider the numerical semigroup $\N_0 \setminus G$. Moreover, we generalize the concepts of Ap\'{e}ry set and the Kunz coordinates for $m$-extensions.

Let $G$ be a gapset with multiplicity $m$ and consider the numerical semigroup \linebreak $S := \N_0 \setminus G$. The \textit{Ap\'{e}ry set} of $S$ (on $m$) is defined as $\Ap(S) = \{w_0, w_1, \ldots, w_{m-1}\}$, where \linebreak $w_i = \min\{s \in S: s \equiv i \pmod{m}\}$, for $i \in [0, m-1]$. Notice that $w_0 = 0$ and, for each $i$, there exits a unique $k_i \in \N$ such that $w_i = mk_i + i$. The \textit{Kunz coordinates} of $S$ (on $m$) are given by $\Kunz(S) = (k_1, k_2, \ldots, k_{m-1})$. Thus, it is possible to consider the Ap\'{e}ry set of $G$ (on $m$) and the Kunz coordinates of $G$ (on $m$) given by $\Ap(G) := \Ap(\N_0 \setminus G)$ and $\Kunz(G) := \Kunz(\N_0 \setminus G)$, respectively. Notice that the elements $w_i$ can be defined as $w_i := m + \max\{g \in G: g \equiv i \pmod{m}\}$, if $i \neq 0$ and $w_0 = 0$. 

We can extend those concepts and define the pseudo Ap\'{e}ry set and the pseudo Kunz coordinates of an $m$-extension as follows. Let $m > 1$ be an integer and $A$ be an $m$-extension. The \textit{pseudo Ap\'{e}ry set} of $A$ (on $m$) is defined as $\Ap^p(A) = \{w_0, w_1, \ldots, w_{m-1}\}$, where $w_i := m + \max\{a \in A: a \equiv i \pmod{m}\}$, if $i \neq 0$ and $w_0 = 0$. Notice that $w_i$ is well-defined for all $i$, since $A$ is a finite subset of $\N$ that can be written as $A = A_0 \cup A_1 \cup \cdots \cup A_{q-1}$, where $A_0 = [1, m-1]$ and $A_{i+1} \subseteq A_i + m$. By writing $w_i = mk_i + i$, for $i \in [1, m-1]$, the \textit{pseudo Kunz coordinates} of $A$ (on $m$) are given by $\Kunz^p(A) = (k_1, k_2, \ldots, k_{m-1})$. We illustrate this construction in the next example.

\begin{example}
Let $A = \{1, 2, 4, 7, 10\}$. It is a $3$-extension, but it is not a gapset \linebreak ($10 = 5 + 5$). In this case, $\Ap^p(A) = \{0, 13, 5\}$ and $\Kunz^p(A) = (4,1)$.
\label{3ext}
\end{example}

If $A$ is an $m$-extension and also a gapset, then the notions of pseudo Ap\'{e}ry set of $A$ and Ap\'{e}ry set of $A$ coincide, as well as the notions of pseudo Kunz coordinates of $A$ and Kunz coordinates of $A$, i.e., $\Ap^p(A) = \Ap(A)$ and $\Kunz^p(A) = \Kunz(A)$. 

The next result characterizes an $m$-extension from its pseudo Kunz coordinates.

\begin{proposition}
Let $A$ be an $m$-extension, with $\Kunz^p(A) = (k_1, k_2, \ldots, k_{m-1})$. If $i \in [1, m-1]$, then $A \cap \{n \in \N: n \equiv i \pmod{m}\} = \{i, i + m, \ldots, i + (k_i - 1)m\}$. Moreover,  $k_i = \#\{a \in A: a \equiv i \pmod{m}\}$, for all $i \in [1, m-1]$.
\label{Kunz}
\end{proposition}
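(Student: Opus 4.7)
The plan is to exploit the filtered structure $A = A_0 \cup A_1 \cup \cdots \cup A_{q-1}$ (with $A_0 = [1,m-1]$ and $A_{j+1} \subseteq m + A_j$) to show, first, that each ``level'' $A_j$ sits in its own mod-$m$ window, and second, that the filtration propagates downward: the presence of an element forces the presence of all its mod-$m$ predecessors.

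First I would establish, by induction on $j$, the containment
$$A_j \subseteq [jm+1,(j+1)m-1].$$
The base case $j=0$ is the definition $A_0=[1,m-1]$; the inductive step uses $A_{j+1}\subseteq m+A_j$ together with the induction hypothesis. An immediate consequence is that every $a\in A$ with $a\equiv i\pmod m$ for some $i\in[1,m-1]$ must have the form $a=i+jm$ for a unique $j\in[0,q-1]$, and moreover $a\in A_j$. In particular, since $A_0=[1,m-1]\subseteq A$, the set $\{a\in A:a\equiv i\pmod m\}$ is non-empty, so $w_i$ and hence $k_i$ are well defined, and $k_i\geq 1$.

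Next, by the definition $w_i = m+\max\{a\in A:a\equiv i\pmod m\} = mk_i+i$, the largest element of $A$ congruent to $i$ mod $m$ is exactly $i+(k_i-1)m$. To get the remaining elements, I would run a downward induction on $j$, starting from $j=k_i-1$: assume $i+jm\in A_j$; then by $A_j\subseteq m+A_{j-1}$ we obtain $i+jm=m+(i+(j-1)m)$ with $i+(j-1)m\in A_{j-1}\subseteq A$. Iterating from $j=k_i-1$ down to $j=1$ yields $i+jm\in A$ for every $j\in[0,k_i-1]$.

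Combining the two directions: the first step of the argument shows that any element of $A$ congruent to $i$ mod $m$ is of the form $i+jm$ with $j\in[0,q-1]$, and the maximality of $k_i-1$ forces $j\leq k_i-1$; the downward-induction step shows all such $j\in[0,k_i-1]$ do occur. Hence
$$A\cap\{n\in\N:n\equiv i\pmod m\}=\{i,i+m,\ldots,i+(k_i-1)m\},$$
and counting gives $k_i=\#\{a\in A:a\equiv i\pmod m\}$. The only mildly delicate point is the downward induction, since the $m$-extension condition is phrased as an upward inclusion $A_{j+1}\subseteq m+A_j$; reading it contrapositively to propagate membership downward is the main (and only) conceptual move.
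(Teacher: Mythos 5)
Your proof is correct and follows essentially the same route as the paper's: identify the largest element $i+(k_i-1)m$ congruent to $i$ from the definition of $w_i$, then propagate membership downward using $A_{j+1}\subseteq m+A_j$. The only difference is that you make explicit the windowing lemma $A_j\subseteq[jm+1,(j+1)m-1]$ (needed to place each element in its level before descending), which the paper's terser argument leaves implicit.
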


\begin{proof}
By the definition of $\Kunz^p(A)$, we have that $m(k_i - 1) + i \in A$, for all $i$ and $mx + i \notin A$, if $x \geq k_i$. Since $A_{j+1} \subseteq m + A_j$ for all $j$, then it follows that \linebreak $m(k_i - \l) + i \in A$, for all $\l \in [1, k_i]$. Thus, the elements of $A$ that are congruent to $i$ modulo $m$ are $i, i + m, \ldots, i + (k_i - 1)m + i$, totalizing $k_i$ elements.
\end{proof}

As a consequence we obtain formulas for the genus and the depth of an $m$-extension in terms of its pseudo Kunz coordinates.

\begin{corollary}
Let $A$ be an $m$-extension with $\Kunz^p(A) = (k_1, k_2, \ldots, k_{m-1})$, genus $g$ and depth $q$. Then $g = \sum_{i=1}^{m-1} k_i$ and $q = \max\{k_i: 1 \leq i \leq m-1\}$.
\label{genus_depth_kunz}
\end{corollary}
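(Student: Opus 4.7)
The plan is to read both formulas directly off Proposition~\ref{Kunz}. For the genus, the definition of an $m$-extension forces $A\cap m\N=\emptyset$, so $A$ decomposes as the disjoint union
\[
A \;=\; \bigsqcup_{i=1}^{m-1}\bigl(A\cap\{n\in\N:\,n\equiv i\pmod m\}\bigr).
\]
Proposition~\ref{Kunz} says each block has exactly $k_i$ elements, so summing cardinalities yields $g=\#A=\sum_{i=1}^{m-1}k_i$ immediately.

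For the depth I would first note that $A$ is finite and non-empty (it contains $[1,m-1]$), so the definition of the conductor gives $c(A)=\max(A)+1$. By Proposition~\ref{Kunz} the largest element of $A$ congruent to $i$ modulo $m$ is $i+(k_i-1)m$, hence
\[
\max(A)\;=\;\max_{1\le i\le m-1}\bigl\{\,i+(k_i-1)m\,\bigr\}.
\]
Pick a maximizer $i_0$. Writing $c(A)=i_0+1+(k_{i_0}-1)m$ with $1\le i_0+1\le m$, dividing by $m$ and taking the ceiling collapses the $i_0+1$ term to $1$, giving $q=\lceil c(A)/m\rceil=k_{i_0}$.

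The remaining step is to verify $k_{i_0}=\max_{1\le i\le m-1}k_i$, which is the only part of the argument requiring a line of reasoning. I would proceed by contradiction: if $k_j>k_{i_0}$ for some $j\in[1,m-1]$, then
\[
j+(k_j-1)m\;\ge\;1+k_{i_0}\,m\;>\;k_{i_0}\,m-1\;\ge\;i_0+(k_{i_0}-1)m,
\]
contradicting the choice of $i_0$. Ties among the $k_i$ are harmless, since all tied indices yield the same value $k_{i_0}$. I do not anticipate any real obstacle here: the corollary is essentially a repackaging of Proposition~\ref{Kunz}, and the only minor subtlety is identifying $c(A)$ with $\max(A)+1$ for a finite non-empty $m$-extension.
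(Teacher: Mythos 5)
Your proof is correct, and the genus half coincides with the paper's: both read off from Proposition~\ref{Kunz} that each nonzero residue class contributes exactly $k_i$ elements and sum. The depth half, however, takes a genuinely different route. The paper writes $A=A_0\cup\cdots\cup A_{q-1}$ (invoking the earlier remark that the number of layers in the defining partition equals the depth), picks $x$ in the top layer $A_{q-1}$, and observes that every layer contains exactly one element congruent to $x$ modulo $m$, so the coordinate $k_t$ of the residue $t$ of $x$ equals $q$; the bound $k_i\leq q$ for the other residues is implicit in the fact that each layer meets each residue class at most once. You instead work straight from the definition $q=\lceil c(A)/m\rceil$, identify $c(A)=\max(A)+1$ for a finite non-empty set, locate $\max(A)=\max_i\{i+(k_i-1)m\}$ via Proposition~\ref{Kunz}, compute the ceiling to get $q=k_{i_0}$ for a maximizing index $i_0$, and verify by the displayed inequality chain that $k_{i_0}$ is indeed $\max_i k_i$ (the chain is valid, since $i_0\leq m-1$). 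Your version is more self-contained, as it does not lean on the unproved assertion that the partition length $t$ equals $q-1$, and it makes the maximality check explicit, which the paper leaves tacit; the paper's version is shorter once that structural fact about the layers is granted.
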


\begin{proof}
By Proposition \ref{Kunz}, $A \cap \{n \in \N: n \equiv i \pmod{m}\}$ has $k_i$ elements and the formula for the the genus can be obtained by summing up the coordinates of $\Kunz^p(A)$. Write $A = A_0 \cup A_1 \cup \cdots \cup A_{q-1}$, with $A_0 = [1, m-1]$ and $A_{i+1} \subseteq A_i + m$ and let $x \in A_{q-1}$. For each $j \in [0, q-1]$, there is exactly one element in $A_j$ that is congruent to $x \pmod{m}$. By Proposition \ref{Kunz}, we conclude that $q = k_{t}$, where $t$ coincides with the remainder when $x$ is divided by $m$ and we are done.
\end{proof}

\begin{example}
The $3$-extension such that $\Kunz^p(A) = (4,1)$ is $A = \{1, 2, 4, 7, 10\}$. Its genus is $5$, its conductor is $11$ and its depth is $4 = \left \lceil \frac{11}{3} \right \rceil$. The remainder when 10 is divided by 3 is 1 and the depth of $A$ is exactly the first coordinate of $\Kunz^p(A)$. The $4$-extension such that $\Kunz^p(G) = (4,4,3)$ is $G = \{1, 2, 3, 5, 6, 7, 9, 10, 11, 13, 14\}$. Its genus is $11$, its conductor is $15$ and its depth is $4 = \left \lceil \frac{15}{4} \right \rceil$. The remainder when 13 is divided by 4 is 1 and the depth of $A$ is exactly the first coordinate of $\Kunz^p(G)$ (this procedure could also be done with the number 14).
\label{ex34}
\end{example}

We recall that if $S$ is a numerical semigroup with multiplicity $m$, then its Kunz coordinates (on $m$), $\Kunz(S) = (k_1, k_2, \ldots, k_{m-1})$, must satisfy the following system (cf. \cite{Rosales}):

\begin{equation}
\begin{cases}
X_i \in \N \\
X_i + X_j \geq  X_{i+j}, \hspace{1.2cm} \text{ for } 1 \leq i \leq j \leq m-1; i + j < m; \\
X_i + X_j + 1 \geq X_{i+j-m}, \text{ for } 1 \leq i \leq j \leq m-1; i + j > m
\end{cases}
\label{system}
\end{equation}

Hence, one can characterize the Kunz coordinates of a gapset in the same way.

\begin{remark}
It is also possible to prove Proposition \ref{q2} using the fact that the pseudo Kunz coordinates of an $m$-extension with depth at most $2$ satisfies the system (\ref{system}).
\end{remark}

One can show that there is bijective map between the set of numerical semigroups with multiplicity $m$ and the set of points in $\N^{m-1}$ that satisfy the system (\ref{system}) (via the Kunz coordinates) (see \cite{Rosales}). 

\begin{example}
The tuple $(4,4,3)$ satisfies the system (\ref{system}). Hence, there is a gapset $G$ such that $\Kunz(G) = (4,4,3)$. Following the Example \ref{ex34}, we obtain \linebreak $G = \{1,2,3,5,6,7,9,10,11,13,14\}$.
\label{gapsetKunz}
\end{example}

If $\mathcal{A}(m)$ is the set of $m$-extensions, then the map $\mu: \mathcal{A}(m) \to \N^{m-1}, A \mapsto \Kunz^p(A)$ is injective. The next result ensures that $\mu$ is a bijective map.


\begin{theorem}
The map $\mu$ is surjective.
\label{surjective}
\end{theorem}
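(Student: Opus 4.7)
The plan is to exhibit, for every tuple $(k_1, k_2, \ldots, k_{m-1}) \in \N^{m-1}$, an explicit $m$-extension $A$ whose pseudo Kunz coordinates are exactly that tuple. Guided by Proposition \ref{Kunz}, the natural candidate is
$$A := \bigcup_{i=1}^{m-1} \bigl\{i,\, i+m,\, i+2m,\, \ldots,\, i+(k_i-1)m\bigr\}.$$
This is a finite subset of $\N$ disjoint from $m\N$, so the only real work is to show (i) that $A$ is an $m$-extension and (ii) that $\Kunz^p(A) = (k_1, \ldots, k_{m-1})$.

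For (i), set $q := \max\{k_i : 1 \leq i \leq m-1\}$ and, for each $j \in [0, q-1]$, define
$$A_j := \bigl\{\, i + jm \,:\, 1 \leq i \leq m-1 \text{ and } k_i \geq j+1 \,\bigr\}.$$
I would then verify the three required properties: the sets $A_j$ are pairwise disjoint and their union is $A$ (by construction, since an element $i + jm$ of $A$ satisfies $0 \leq j \leq k_i -1$, hence $k_i \geq j+1$); $A_0 = [1, m-1]$ because every $k_i \geq 1$; and $A_{j+1} \subseteq m + A_j$, since any $i + (j+1)m \in A_{j+1}$ has $k_i \geq j+2 \geq j+1$, so $i + jm \in A_j$ and thus $i+(j+1)m = m + (i+jm) \in m + A_j$.

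For (ii), I would simply compute: the elements of $A$ congruent to $i$ modulo $m$ are precisely $i, i+m, \ldots, i+(k_i-1)m$, so $\max\{a \in A : a \equiv i \pmod{m}\} = i + (k_i-1)m$, and hence $w_i = m + i + (k_i-1)m = k_i m + i$. Reading off $\Kunz^p(A)$ from $w_i = k_i m + i$ gives the desired tuple.

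No step here presents a real obstacle; the construction essentially reverses the description given in Proposition \ref{Kunz}. The only point that requires care is the partition witnessing that $A$ is an $m$-extension, and the layer-by-layer definition of the $A_j$ above makes the containment $A_{j+1} \subseteq m + A_j$ transparent from the monotonicity of the condition $k_i \geq j+1$.
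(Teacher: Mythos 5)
Your proof is correct and follows essentially the same route as the paper: the same explicit candidate $A = \bigcup_i \{i, i+m, \ldots, i+(k_i-1)m\}$, the same layer-by-layer partition (your $A_j$ is exactly the paper's $[jm+1,(j+1)m-1]\cap A$), and the same verification of $A_{j+1} \subseteq m + A_j$. Your explicit computation of $\Kunz^p(A)$ at the end is a detail the paper leaves implicit, but the argument is identical in substance.
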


\begin{proof}
Let $(k_1, k_2, \ldots, k_{m-1}) \in \N^{m-1}$ and consider the finite set of $\N$ given by 

$$A = \bigcup_{j=1}^{m-1} \{j, j+m, \ldots, j + (k_j-1)m\}.$$

Now we prove that $A$ is an $m$-extension. Write $A = A_0 \cup A_1 \cup \ldots \cup A_{q-1}$, where \linebreak $q = \max\{k_j: 1 \leq j \leq m-1\}$ and $A_i = [im + 1, (i+1)m - 1] \cap A$. If $x \in A$, then $x \in A_i$ for some $i \in [0, q]$ and $x \equiv j \pmod{m}$ for some $j \in [1, m-1]$. If $i = 0$, then there is nothing to show. If $i > 0$, then $x - m \in [(i-1)m + 1, im - 1]$ and $x - m \in A$ (by construction). Thus, $x - m \in A_{i-1}$. Hence, $A_i \subseteq A_{i-1} + m$, for all $i \in [1, q]$ and $A$ is an $m$-extension. Moreover, $\mu(A) = (k_1, k_2, \ldots, k_{m-1})$.
\end{proof}

\begin{example}
The tuple $(1,3,3,2)$ is not associated to any gapset, because it does not satisfy the system (\ref{system}) ($k_1 + k_1 = 2 < 3 = k_2$). However, Proposition \ref{surjective} ensures that it is associated to a $5$-extension, namely $A = \{1,2,3,4,7,8,9,12,13\}$.
\label{naoexemplo}
\end{example}

\section{Identifying $m$-extensions with tiling of boards}
\label{ident}

In this section, we are interested in identifying an $m$-extension of genus $g$ with a tiling of a $g$-board, which is a board $1 \times g$ ($1$ row and $g$ columns), where $g$ is a positive integer. In particular, it allows us to identify a gapset of genus $g$ with a tiling of a $g$-board. Throughout, we denote the set of all $m$-extensions of genus $g$ by $\mathcal{A}_g$ and the set of tilings of a $g$-board by $\mathcal{C}(g)$.

Let $A$ be an $m$-extension with $\Kunz^p(A) = (k_1, k_2, \ldots, k_{m-1})$. Recall that the genus and the depth of $A$ are given by $\sum_{i=1}^{m-1} k_i$ and \linebreak $\max\{k_i: 1 \leq i \leq m-1\}$, respectively. We identify the $m$-extension $A$ with the tiling \linebreak $(k_1, k_2, \ldots, k_{m-1})$, i.e., the first part of the tiling has size $1 \times k_1$, the second part of the tiling has size $1 \times k_2$ and so on, up to the $(m-1)$-th part of the tiling that has size $1 \times k_{m-1}$. We observe that some of the invariants can be easily deduced, when we identify an $m$-extension of genus $g$ with a tiling of a $g$-board, via its pseudo Kunz coordinates:

\begin{itemize}
  \item the genus of the $m$-extension is the size of the associated board;
  \item the number $m$ of the $m$-extension is the number of parts of the tiling plus one;
  \item the depth of the $m$-extension is the size of the greatest part of the tiling.
\end{itemize}

We illustrate with the following example.

\begin{example}
Let $A = \{1,2,4,7,10\}$ be the $3$-extension of the Example \ref{3ext}, which has genus $5$, depth $4$ and $\Kunz^p(A) = (4, 1)$. 


\end{example}

Now we prove that the association of an $m$-extension with its pseudo Kunz coordinates induces a bijection between the set of all $m$-extensions of genus $g$ and the set of tilings of a $g$-board. 

\begin{theorem}
Let $g$ be a positive integer. Then $\sigma: \mathcal{A}_g \to \mathcal{C}(g), A \mapsto \sigma(A) = \Kunz^p(A)$ is a bijective map.
\end{theorem}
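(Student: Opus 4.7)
My plan is to deduce this essentially as a corollary of Theorem \ref{surjective} together with Corollary \ref{genus_depth_kunz}, organizing $\mathcal{A}_g$ by multiplicity and matching tilings by number of parts. First I would verify that $\sigma$ is well-defined into $\mathcal{C}(g)$: if $A$ is an $m$-extension of genus $g$, then its pseudo Kunz coordinates $(k_1, \ldots, k_{m-1})$ lie in $\N^{m-1}$, so each $k_i \geq 1$, while Corollary \ref{genus_depth_kunz} gives $\sum_{i=1}^{m-1} k_i = g$. Hence $(k_1, \ldots, k_{m-1})$ is a composition of $g$ into $m-1$ positive parts, which is exactly a tiling of the $g$-board, and $\sigma(A) \in \mathcal{C}(g)$.

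For injectivity, the key observation is that the multiplicity of an $m$-extension is forced to be exactly $m$, since $[1, m-1] \subseteq A$ but $m \notin A$. Therefore the number of parts $m-1$ of the tiling $\sigma(A)$ intrinsically recovers $m$. If $\sigma(A) = \sigma(A')$ for $A, A' \in \mathcal{A}_g$, the two $m$-extensions share the same multiplicity $m$ and the same pseudo Kunz coordinates, so they coincide by the injectivity of $\mu$ on $\mathcal{A}(m)$ (which is part of the content of Theorem \ref{surjective}). For surjectivity, given a tiling $(k_1, \ldots, k_r) \in \mathcal{C}(g)$, I would set $m := r + 1 \geq 2$ and invoke Theorem \ref{surjective} to produce an $m$-extension $A$ with $\Kunz^p(A) = (k_1, \ldots, k_r)$; by Corollary \ref{genus_depth_kunz}, $A$ has genus $\sum_{i=1}^{r} k_i = g$, so $A \in \mathcal{A}_g$ and $\sigma(A)$ is the prescribed tiling.

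I do not anticipate a serious obstacle. The only delicate point is that $\mathcal{A}_g$ pools together $m$-extensions for varying $m$, so one must rule out collisions between pseudo Kunz tuples of different lengths; but this is immediate because the length $m-1$ is recovered from $A$ as one less than its multiplicity. All other content is a direct reorganization of the bijection $\mu$ provided by Theorem \ref{surjective}, restricted to the fiber over genus $g$.
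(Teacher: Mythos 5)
Your argument is correct and follows essentially the same route as the paper: surjectivity via the construction of Theorem \ref{surjective} with $m$ taken to be one more than the number of parts, and genus computed from Corollary \ref{genus_depth_kunz}. The only difference is that you spell out the well-definedness and injectivity (recovering $m$ from $A$ as the least positive integer not in $A$, hence from the length of the tuple), which the paper dismisses as ``clear''; this is a welcome elaboration, not a different method.
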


\begin{proof}
Clearly, the map $\sigma$ is well defined and injective. Now we prove that $\sigma$ is surjective. If $(c_1, c_2, \ldots, c_n) \in \mathcal{C}(g)$, then $\sum_{i=1}^{n} c_i = g$.  Following the construction of the proof of Theorem \ref{surjective}, we shall consider the $(n+1)$-extension given by 

$$A = \bigcup_{j=1}^{n} \{j, j+(n+1), \ldots, j + (c_j-1)(n+1)\}.$$

Hence, $A$ has genus $g$ and $\sigma(A) = (c_1, c_2, \ldots, c_n)$.

\end{proof}

\begin{corollary}
The cardinality of $\mathcal{A}_g$ is exactly $2^{g-1}$ for all positive integer $g$. 
\label{numbermext}
\end{corollary}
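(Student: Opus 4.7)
The plan is to combine the two ingredients assembled immediately before this corollary: the bijection $\sigma \colon \mathcal{A}_g \to \mathcal{C}(g)$ established in the preceding theorem, and the counting formula $\#\mathcal{C}(g) = 2^{g-1}$ proved in Proposition \ref{Cg}. Once a bijection is in hand, the cardinality of the domain equals the cardinality of the codomain, so the statement falls out as a one-line composition of these two facts.

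Concretely, the proof I would write is: let $g \in \N$. By the theorem, $\sigma$ is a bijective map, so $\#\mathcal{A}_g = \#\mathcal{C}(g)$. Applying Proposition \ref{Cg} yields $\#\mathcal{A}_g = 2^{g-1}$.

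There is essentially no obstacle, since all the real content has been front-loaded into the prior theorem (well-definedness, injectivity coming from the fact that $\Kunz^p$ determines the $m$-extension via Proposition \ref{Kunz}, and surjectivity coming from the explicit construction of an $m$-extension from a tuple of positive integers in Theorem \ref{surjective}) and into Proposition \ref{Cg} (which in turn rested on Lemma \ref{inicFibo}). The only thing worth double-checking is that the bijection in the preceding theorem pairs up \emph{every} $m$ simultaneously, so that $\mathcal{A}_g = \bigcup_{m \geq 2} \{A : A \text{ is an } m\text{-extension of genus } g\}$ is really what gets counted by $\mathcal{C}(g)$; this is guaranteed because the number of parts of the tiling $\sigma(A)$ equals $m-1$, so tilings with different numbers of parts correspond to $m$-extensions with different values of $m$, and the map is simultaneously bijective across all multiplicities.
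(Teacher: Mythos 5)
Your proposal is correct and is essentially identical to the paper's own proof: invoke the bijection $\sigma \colon \mathcal{A}_g \to \mathcal{C}(g)$ and then apply Proposition \ref{Cg} to get $\#\mathcal{A}_g = \#\mathcal{C}(g) = 2^{g-1}$. Your extra remark that $\mathcal{A}_g$ ranges over all multiplicities simultaneously (with $m-1$ recovered as the number of parts of the tiling) is a reasonable sanity check but adds nothing beyond what the preceding theorem already guarantees.
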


\begin{proof}
The bijection $\sigma$ guarantees that the cardinality of $\mathcal{A}_g$ coincides with the cardinality of $\mathcal{C}(g)$, which is $2^{g-1}$ by Proposition \ref{Cg}. 
\end{proof}

\begin{remark}
Let $(a_g)$ be the sequence of the number of $m$-extensions of genus $g$. As a consequence of Corollary \ref{numbermext}, we conclude that
\begin{itemize}
  \item $a_{g} + a_{g+1} = \frac{3}{4}a_{g+2} < a_{g+2}$, for all positive integer $g$
  \item $\frac{a_{g+1}}{a_{g}} = 2$, for all positive integer $g$
  \item $\frac{a_{g+1} + a_{g}}{a_{g+2}} = \frac{3}{4}$, for all positive integer $g$
\end{itemize}
which is a proved version of Bras-Amorós' conjecture for $m$-extensions.
\label{BA_mext}
\end{remark}

As a consequence of the definition of the map $\sigma$, the image of an $m$-extension of genus $g$ and depth $q$ is a tiling in $\mathcal{C}(g,k)$, for all $k \geq q$; reciprocally, if $C$ is a tiling in $\mathcal{C}(g,q)$, then $\sigma^{-1}(C)$ is an $m$-extension of genus $g$ and depth at most $q$. Let $\Gamma(g)$ be the set of gapsets of genus $g$ and $\Gamma(g,q) := \{G \in \Gamma(g): q(G) \leq q\}$. It was proved by Zhao \cite{Zhao} that $\#\Gamma(g,2) = \F_{g+1}$ by using an approach involving numerical semigroups and an identity involving the Fibonacci numbers and binomial coefficients. Eliahou and Fromentin \cite{EF} proved that the sequences $(\#\Gamma(g,2))$ and $(\F_{g+1})$ satisfies the same recurrence relation and have the same initial conditions. Here we give a third proof for this fact.


\begin{proposition}
The number of gapsets of genus $g$ and $q \leq 2$ is $\F_{g+1}$.
\label{q=2}
\end{proposition}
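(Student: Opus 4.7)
The plan is to read the statement directly off the bijection $\sigma$ constructed in Section \ref{ident}. The key observation is that gapsets of genus $g$ with $q(G)\leq 2$ are precisely the $m$-extensions of genus $g$ with depth at most $2$: every gapset of multiplicity $m$ is by definition an $m$-extension, and conversely, Proposition \ref{q2} guarantees that every $m$-extension of depth at most $2$ is automatically a gapset. Thus, as sets,
\[
\{G \in \Gamma(g): q(G)\leq 2\} \;=\; \{A \in \mathcal{A}_g : q(A)\leq 2\}.
\]

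Next, I would invoke the bijection $\sigma : \mathcal{A}_g \to \mathcal{C}(g)$ from the previous theorem. Since the depth of an $m$-extension coincides with the size of the largest part of the associated tiling (as recorded in the three-item list after Corollary \ref{genus_depth_kunz}), $\sigma$ restricts to a bijection
\[
\{A \in \mathcal{A}_g : q(A)\leq 2\} \;\longleftrightarrow\; \mathcal{C}(g,2),
\]
where $\mathcal{C}(g,2)$ is the set of tilings of a $g$-board by pieces of size $1\times 1$ and $1\times 2$.

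Finally, formula (\ref{Cgk}) gives $\#\mathcal{C}(g,2) = \F^{(2)}_{g+1}$, and since the $2$-generalized Fibonacci sequence is the usual Fibonacci sequence, this equals $\F_{g+1}$. Chaining the three identifications yields the desired count.

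There is no real obstacle: the argument is essentially a one-line consequence of the bijection $\sigma$ once one notes that the depth condition translates to a bound on the part sizes of the tiling. The only subtlety worth spelling out in the writeup is the ``every such $m$-extension is a gapset'' step, which is exactly where Proposition \ref{q2} is needed to convert the count over $m$-extensions (easy via $\sigma$) into a count over gapsets (what is claimed).
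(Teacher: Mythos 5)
Your argument is correct and is essentially the paper's own proof: restrict the bijection $\sigma$ to gapsets of depth at most $2$, use Proposition \ref{q2} to see that the image is exactly $\mathcal{C}(g,2)$, and count via (\ref{Cgk}) with $\F^{(2)}_{g+1}=\F_{g+1}$. No issues to report.
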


\begin{proof}
Consider the map $\sigma$ restricted to $\Gamma(g,2)$. We only have to determine the image of this application. We claim that it coincides with the set $\mathcal{C}(g,2)$. In fact, every gapset of genus $g$ and depth at most $2$ is identified with a tiling in $\mathcal{C}(g,2)$. Moreover, if $C$ is a tiling in $\mathcal{C}(g,2)$, then it represents an $m$-extension with depth at most $2$. By Proposition \ref{q2}, this $m$-extension is a gapset. Hence, the preimage of $C$ is a gapset of genus $g$ and depth at most 2. Hence, there is a bijection between $\Gamma(g,2)$ and $\mathcal{C}(g,2)$ and the result follows.
\end{proof}

We also obtain an upper bound for the number of gapsets of genus $g$ and depth at most $k$ depending on the $k$-generalized Fibonacci numbers.

\begin{proposition}
The number of gapsets of genus $g$ and depth at most $k$ is bounded above by $\F^{(k)}_{g+1}$.
\label{cotasup}
\end{proposition}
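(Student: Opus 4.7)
The plan is to imitate the strategy of Proposition \ref{q=2}, but to replace the equality with an inequality by observing that the restricted map $\sigma$ need no longer be surjective onto $\mathcal{C}(g,k)$ in general.

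First I would recall that by the bijection theorem, $\sigma: \mathcal{A}_g \to \mathcal{C}(g)$ sends an $m$-extension $A$ of genus $g$ to its pseudo Kunz coordinates, viewed as a $g$-composition. By Corollary \ref{genus_depth_kunz}, the depth of $A$ equals the maximum entry of $\Kunz^p(A)$, that is, the size of the largest tile in $\sigma(A)$. Consequently, an $m$-extension of genus $g$ has depth at most $k$ if and only if its image under $\sigma$ lies in $\mathcal{C}(g,k)$.

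Next, since every gapset is in particular an $m$-extension (with $m$ its multiplicity), we have $\Gamma(g,k) \subseteq \mathcal{A}_g$, and by the observation above $\sigma$ restricts to an injection
\[
\sigma\big|_{\Gamma(g,k)} : \Gamma(g,k) \longrightarrow \mathcal{C}(g,k).
\]
Injectivity is inherited from $\sigma$. Therefore
\[
\#\Gamma(g,k) \;\leq\; \#\mathcal{C}(g,k) \;=\; \F^{(k)}_{g+1},
\]
where the last equality is equation (\ref{Cgk}).

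There is essentially no obstacle: the only subtle point is to explain why the inequality need not be an equality, which is simply that for $k \geq 3$ the preimage $\sigma^{-1}(C)$ of a tiling $C \in \mathcal{C}(g,k)$ is always an $m$-extension but may fail to be a gapset (as already illustrated in Example \ref{naoexemplo} and Example \ref{3ext} for depth $\geq 3$). I would mention this to contrast with the $k=2$ case of Proposition \ref{q2}, where every $m$-extension of depth at most $2$ is automatically a gapset and hence the corresponding restriction of $\sigma$ is a bijection.
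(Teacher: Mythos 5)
Your proposal is correct and follows essentially the same route as the paper: restrict the injective map $\sigma$ to $\Gamma(g,k)$, observe via Corollary \ref{genus_depth_kunz} that its image lands in $\mathcal{C}(g,k)$, and conclude with the count $\#\mathcal{C}(g,k)=\F^{(k)}_{g+1}$ from (\ref{Cgk}). Your extra remark on why the bound need not be sharp for $k\geq 3$ is accurate but not needed for the statement.
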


\begin{proof}
Every gapset of genus $g$ and depth at most $k$ has image under the map $\sigma$ being a tiling in $\mathcal{C}(g,k)$. Hence, the map $\sigma$ restricted to $\Gamma(g, k)$ has as image a subset of $\mathcal{C}(g,k)$. From (\ref{Cgk}), the result follows.
\end{proof}

In particular, if $(\T_{g})$ denotes the Tribonacci sequence, then $n'_g \leq \T_{g+1}$
. It has been observed by Eliahou and Fromentin \cite{EF} as a consequence of the following remarkable relation involving the sequence $(n'_ g)$: $n'_{g+3} \leq n'_{g} + n'_{g+1} + n'_{g+2}$, for all $g$ and $(n'_0, n'_1, n'_2) = (1, 1, 2) = (\T_1, \T_2, \T_3)$. Another immediate consequence of Proposition \ref{cotasup} is the following upper bound for $n_g$.

\begin{corollary}
Let $n_g$ be the number of gapsets of genus $g > 0$. Then  $n_g \leq 2^{g-1}$.
\label{cotan_g}
\end{corollary}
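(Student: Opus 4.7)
The plan is to derive this upper bound as a direct consequence of Corollary \ref{numbermext}. The idea is that the set of gapsets of genus $g$ sits inside the much larger family $\mathcal{A}_g$ of all $m$-extensions of genus $g$, whose cardinality has just been computed to be exactly $2^{g-1}$. So the inequality will drop out of the inclusion together with the count.

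The argument rests on three ingredients, each already available. First, as noted in the introduction of the paper, every gapset with multiplicity $m$ is an $m$-extension (the converse being false, as witnessed for instance by Example \ref{naoexemplo}). Second, for $g > 0$ any gapset $G$ of genus $g$ is non-empty, and the gapset property applied repeatedly to $1 + (z-1)$ for some $z \in G$ forces $1 \in G$; equivalently, the complementary numerical semigroup $\N_0 \setminus G$ has multiplicity $m \geq 2$. Hence $G$ is an $m$-extension for this value of $m$, so
\[
\#\{\text{gapsets of genus } g\} \;\leq\; \#\mathcal{A}_g.
\]
Third, Corollary \ref{numbermext} gives $\#\mathcal{A}_g = 2^{g-1}$, so chaining the two yields $n_g \leq 2^{g-1}$.

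There is no genuine obstacle here: all the combinatorial work has been concentrated in the bijection $\sigma$ of Section \ref{ident} and in Proposition \ref{Cg}. The corollary is really just recording that, since gapsets of a given genus form a subfamily of the tileable objects counted by $\#\mathcal{C}(g) = 2^{g-1}$, the bound is automatic. It is worth noting that this upper bound is rather loose, precisely because many $m$-extensions fail to be gapsets; refining the count by restricting to tilings whose pseudo Kunz coordinates satisfy the system \eqref{system} is exactly what the later sections of the paper exploit to obtain the sharper bounds in terms of $k$-generalized Fibonacci numbers.
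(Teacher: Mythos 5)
Your proof is correct and follows essentially the same route as the paper: both arguments reduce the bound to the identification of gapsets with tilings of a $g$-board via $\sigma$ and the count $2^{g-1}$. The only cosmetic difference is that you invoke the inclusion of gapsets of genus $g$ into $\mathcal{A}_g$ and Corollary \ref{numbermext} directly, whereas the paper specializes Proposition \ref{cotasup} at $k=g$ (using the depth bound $q \leq g$ of Proposition \ref{dep}) together with $\F^{(g)}_{g+1}=2^{g-1}$; both reductions rest on the same bijection.
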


\begin{proof}
The number $n_g$ can be defined as $n_g = \{G \in \Gamma(g): q(G) \leq g\}$. Hence, it coincides with the cardinality of $\#\Gamma(g, g)$. From Proposition \ref{cotasup}, we obtain the result.
\end{proof}

\section{A lower bound for $n'_g$}

In this section, we provide a lower bound for the number $n'_g$, which is obtained using the identification given in section 4. For this propose, we use the fact that there exist exactly $\F_{g+1}$ gapsets of genus $g$ and depth at most 2 (cf. Proposition \ref{q=2}) and we construct a family of gapsets of genus $g$, multiplicity $m$ and depth $3$. The next result exhibits such a family.

\begin{theorem}
Let $a$ be a positive integer and $A$ be an $m$-extension such that \linebreak $\Kunz^p(A) = (k_1, \ldots k_{a-1}, k_a, k_{a+1} \ldots, k_{m-1})$. If $k_\l \in \{2, 3\}$ for all $\l \in [1, a-1], k_a = 3$ and $k_\l \in \{1, 2\}$, for all $\l \in [a+1, m-1]$, then $A$ is a gapset. Moreover, $A$ has depth 3.
\label{gapsetsq=3}
\end{theorem}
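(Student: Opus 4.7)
The plan is to verify the gapset condition through the characterization of pseudo Kunz coordinates recalled in the remark following (\ref{system}): an $m$-extension $A$ is a gapset precisely when $\Kunz^p(A)$ satisfies the Rosales inequalities (\ref{system}). The depth claim is then immediate from Corollary \ref{genus_depth_kunz}, since $\max\{k_\ell : 1 \leq \ell \leq m-1\} = k_a = 3$: every coordinate to the left of $a$ lies in $\{2,3\}$ and every coordinate to the right lies in $\{1,2\}$, so all entries are at most $3$ while $k_a$ achieves this bound.

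The substance of the proof is thus the verification of (\ref{system}) for the given tuple. The key structural data, which I would record at the outset, are: $k_\ell \geq 2$ for every $\ell \in [1,a]$, $k_\ell \geq 1$ for every $\ell \in [a+1, m-1]$, and $k_\ell \leq 3$ for every $\ell \in [1, m-1]$. I would then split into the two families of inequalities of (\ref{system}) and analyze four sub-cases according to whether $i$ and $j$ lie in $[1,a]$ or in $[a+1, m-1]$. For the inequality $k_i + k_j \geq k_{i+j}$ with $i+j < m$: if at least one of $i,j$ is in $[1,a]$, then $k_i + k_j \geq 2 + 1 = 3 \geq k_{i+j}$; if both $i,j \in [a+1, m-1]$, then $i+j > a$, which forces $k_{i+j} \leq 2$ while $k_i + k_j \geq 1 + 1 = 2$. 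For the inequality $k_i + k_j + 1 \geq k_{i+j-m}$ with $i+j > m$: if one of $i,j$ lies in $[1,a]$, then $k_i + k_j + 1 \geq 2 + 1 + 1 = 4 > 3 \geq k_{i+j-m}$; if both lie in $[a+1, m-1]$, then $k_i + k_j + 1 \geq 1 + 1 + 1 = 3 \geq k_{i+j-m}$.

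There is no real obstacle here; the argument is a careful bookkeeping of four cases. The only place where the inequalities become tight and merit a second look is the shifted inequality with both $i, j > a$ and $i+j-m$ falling in $[1, a]$, where $k_{i+j-m}$ may equal $3$ and (\ref{system}) becomes an equality. The asymmetry \emph{left coordinates} $\geq 2$, \emph{right coordinates} $\geq 1$ is precisely the amount of slack needed to absorb the single peak $k_a = 3$, which is why this family — rather than an arbitrary tuple in $\{1,2,3\}^{m-1}$ — produces gapsets. This slack analysis is the main conceptual content of the proof; everything else is routine verification.
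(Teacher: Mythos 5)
Your proposal is correct and follows essentially the same route as the paper: verifying that the tuple satisfies the system (\ref{system}) case by case and reading off the depth from Corollary \ref{genus_depth_kunz}. Your write-up is in fact slightly more explicit than the paper's at the one tight spot (both indices in $[a+1,m-1]$, where you note $i+j>a$ forces $k_{i+j}\leq 2$), but the argument is the same.
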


\begin{proof}
We prove that $\Kunz^p(A)$ satisfies the system (\ref{system}). Let $i$ and $j$ be indexes lying in $[1, m-1]$. If $i$ and $j$ are such that $i + j > m$, then, $k_i + k_j + 1 \geq 3 \geq k_{i+j - m}$. Now, let $i$ and $j$ be such that $i + j < m$. If $i$ and $j \in [1, a]$, then $k_i + k_j \geq 4 > k_{i + j}$. If $i \in [1, a]$ and $j \in [a+1, m-1]$, then $k_i + k_j \geq 3 \geq k_{i + j}$. If $i$ and $j \in [a+1, m-1]$, then $k_i + k_j \geq 2 \geq k_{i + j}$ and we are done.
\end{proof}

\begin{remark}
Another way to prove Theorem \ref{gapsetsq=3} is proving that the associated $m$-extension $A$ satisfies the definition of a gapset.
\end{remark}

\begin{proposition}
Let $(\F_n)$ and $(\PP_n)$ be the Fibonacci and the Padovan sequences, respectively and consider a $g$-board divided in three parts. The quantity of tilings of a $g$-board, where the first part has tiles of types $1 \times 2$ and $1 \times 3$ (in any order), the second part has a unique tile of type $1 \times 3$ and the last part has tiles of types $1 \times 1$ or $1 \times 2$ (in any order) is given by

$$\F_{g-2} + \sum_{n = 2}^{g-3}\PP_n \cdot \F_{g - 2 - n}.$$

\label{fibopado}
\end{proposition}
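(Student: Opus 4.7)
The plan is to enumerate the tilings by conditioning on the lengths of the three parts. Write $n_1$ for the length of the first part and $n_3$ for the length of the third part; the middle part has length $3$ by hypothesis, so the two unknowns are constrained by $n_1 + n_3 = g-3$ with $n_1, n_3 \geq 0$.

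First I would count the tilings of each block independently. For the first part, the number of tilings of an $n_1$-board using only tiles of type $1 \times 2$ and $1 \times 3$ is precisely $\PP_{n_1}$, by the combinatorial interpretation of the Padovan numbers given in Section~2 (using the conventions $\PP_0 = 1$ for the empty tiling and $\PP_1 = 0$). For the third part, the number of tilings of an $n_3$-board with tiles of types $1 \times 1$ and $1 \times 2$ equals $\# \mathcal{C}(n_3, 2)$, which by identity (\ref{Cgk}) with $k = 2$ equals $\F^{(2)}_{n_3+1} = \F_{n_3+1}$. The middle block contributes a factor of $1$.

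Multiplying these counts and summing over all admissible splittings yields
\begin{equation*}
N \;=\; \sum_{n_1 = 0}^{g-3} \PP_{n_1} \cdot \F_{(g-3-n_1)+1} \;=\; \sum_{n_1 = 0}^{g-3} \PP_{n_1} \cdot \F_{g-2-n_1}.
\end{equation*}
Isolating the first two terms and using $\PP_0 = 1$ and $\PP_1 = 0$, the $n_1 = 0$ term contributes $\F_{g-2}$ and the $n_1 = 1$ term vanishes, so
\begin{equation*}
N \;=\; \F_{g-2} + \sum_{n = 2}^{g-3} \PP_n \cdot \F_{g-2-n},
\end{equation*}
which is exactly the claimed formula. (Boundary cases $g = 3, 4$ are covered since the displayed sum is empty and $\F_{g-2}$ gives the correct count of $1$.)

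The argument is essentially a convolution, so no step should present a real obstacle; the only delicate point is being consistent with the initial conditions, in particular recognizing that $\PP_1 = 0$ forces the sum to start at $n = 2$ and that $\PP_0 = 1$ is what produces the isolated $\F_{g-2}$ term.
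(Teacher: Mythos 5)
Your argument is correct and is essentially the same as the paper's: both condition on the length of the first block and sum the product (Padovan count for the first block) $\times$ (Fibonacci count for the last block) over the admissible splittings. The only cosmetic difference is that you write one uniform convolution starting at $n_1=0$ and then use $\PP_0=1$, $\PP_1=0$ to extract the isolated $\F_{g-2}$ term, while the paper treats the empty first part as a separate case and lets $n$ run from $2$ to $g-3$.
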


\begin{proof}
Let $n$ be the size of the first part. If $n = 0$, then the second part is $1 \times 3$ and there are $\F_{g-2}$ tilings of a $(g-3)$-board with parts $1 \times 1$ and $1 \times 2$. If $n$ is a positive integer, then $n \geq 2$ and there are $\PP_n$ forms to tile the first part. The second part is of the type $1 \times 3$ and there are $\F_{g-2-n}$ forms to tile the final part, which can be seen as a $(g - n - 3)$-board. By running $n$ from 2 to $g - 3$, we obtain the result.
\end{proof}

\begin{corollary}
The number of gapsets of genus $g$ and depth $q \leq 3$ (and also the number gapsets of genus $g$) is bounded below by

$$\F_{g+2} - \PP_{g+1}.$$
\end{corollary}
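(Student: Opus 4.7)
The aim is to exhibit two disjoint families of gapsets of genus $g$ and depth at most $3$ whose sizes add up to $\F_{g+2} - \PP_{g+1}$. Family~I consists of all gapsets of genus $g$ and depth at most $2$, which by Proposition~\ref{q=2} has size $\F_{g+1}$. Family~II consists of the gapsets produced by Theorem~\ref{gapsetsq=3}. Each such gapset has pseudo Kunz entry $k_a = 3$ and every other entry at most $3$, so Corollary~\ref{genus_depth_kunz} forces its depth to equal $3$; hence Family~II is disjoint from Family~I.

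To count Family~II I would translate via the bijection $\sigma$ of Section~\ref{ident}. A tuple $(k_1,\ldots,k_{m-1})$ satisfying the hypotheses of Theorem~\ref{gapsetsq=3} corresponds, under $\sigma^{-1}$, to a tiling of the $g$-board whose parts read, left to right, as an initial run of $1\times 2$ and $1\times 3$ tiles, then a single $1\times 3$ tile, then a final run of $1\times 1$ and $1\times 2$ tiles. The index $a$ can be recovered from the tuple as the position of the \emph{rightmost} entry equal to $3$ (all entries to its right lie in $\{1,2\}$ by hypothesis), so the parametrization produces each tuple exactly once. These are precisely the tilings enumerated in Proposition~\ref{fibopado}, giving
$$\F_{g-2} + \sum_{n=2}^{g-3} \PP_n \cdot \F_{g-2-n}$$
elements in Family~II.

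Adding the two counts and using the initial values $\PP_{-3}=1$, $\PP_{-2}=\PP_{-1}=0$, $\PP_0=1$, $\PP_1=0$, I would rewrite the total as
$$\F_{g+1} + \F_{g-2} + \sum_{n=2}^{g-3} \PP_n \cdot \F_{g-2-n} \;=\; \sum_{n=-3}^{g-3} \PP_n \cdot \F_{g-2-n},$$
which by Proposition~\ref{Padovan} equals $\F_{g+2} - \PP_{g+1}$. Since Family~I $\cup$ Family~II is contained in $\Gamma(g,3) \subseteq \Gamma(g)$, the stated lower bounds on $\#\Gamma(g,3)$ and on $n_g$ follow simultaneously.

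The only non-mechanical step is the bookkeeping that makes the two families disjoint and avoids double-counting within Family~II; once one observes that the depth-$\leq 2$ hypothesis excludes any $k_i = 3$ and that $a$ is intrinsic to the tuple as the location of its last $3$, the rest is routine application of Propositions~\ref{q=2}, \ref{fibopado}, and \ref{Padovan}.
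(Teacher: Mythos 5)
Your proposal is correct and takes essentially the same route as the paper: add the $\F_{g+1}$ gapsets of genus $g$ and depth at most $2$ to the depth-$3$ family produced by Theorem \ref{gapsetsq=3}, count the latter via the tilings of Proposition \ref{fibopado}, and then absorb both counts into the single sum evaluated by Proposition \ref{Padovan}. Your explicit verification of disjointness (depth exactly $3$) and of the uniqueness of the index $a$ as the rightmost entry equal to $3$ only makes precise points the paper leaves implicit.
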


\begin{proof}
By Proposition \ref{gapsetsq=3}, every tiling considered in Proposition \ref{fibopado} is associated to a gapset. Moreover, the number of gapsets of genus $g$ and depth at most 2 is $\F_{g+1}$. In particular, the number obtained in Proposition \ref{fibopado} plus $\F_{g+1}$ is a lower bound for $n'_g$. Hence, by the definition of the Padovan numbers with indexes $-3, -2, -1, 0$ and $1$ and Proposition \ref{Padovan}, we conclude that

$$n'_g \geq \sum_{n = -3}^{g-3}\PP_n \cdot \F_{g - 2 - n} = \F_{g+2} - \PP_{g+1}.$$
\end{proof}

As far as we know, this is the first explicit global lower bound for the number $n'_g$ that does not depend on the elements of the sequence itself. In \cite{EF}, the authors proves that $n'_g \geq n'_{g-1} + n'_{g-2}$ for $g \geq 2$ and $n'_g \geq 8\F_g$, for $g \geq 64$. Some authors provided lower bounds for $n_g$ (see \cite{Amoros, Elizalde}), with no reference to the depth of the gapset. We present the bound obtained by Bras-Amorós \cite{Amoros}, our bound, the bound obtained in \cite{EF} in Table \ref{tabela} and the precise values for $n'_g$ and $n_g$. We do not present the bounds obtained in \cite{Elizalde}, because he considers gapsets with depth greater than 3 in his construction.

\begin{table}[h]
\centering
\begin{tabular}{|c|c|c|c|c|c|}
  \hline
   $g$ & $2\F_g$ & $\F_{g+2} - \PP_{g+1}$ &  $n'_{g-1} + n'_{g-2}$ & $n'_g$ & $n_g$ \\
     \hline                               
     0 & * & 1& * & 1 & 1 \\
     \hline
     1 & * & 1 & * & 1 & 1 \\
     \hline
     2 & 2 & 2 & 2 & 2 & 2 \\
     \hline
     3 & 4 & 4 & 3 & 4 & 4 \\
     \hline
     4 & 6 & 6 & 6 & 6 & 7 \\
     \hline
     5 & 10 & 11 & 10 & 11 &  12 \\
     \hline
     6 & 16 & 18 & 17 & 20 & 23 \\
     \hline
     7 & 26 & 30 & 31 & 33 & 39 \\
     \hline
     8 & 42 & 50 & 53 & 57 & 67 \\
     \hline
     9 & 68 & 82 & 90 & 99 & 118 \\
     \hline
     10 & 110 & 135 & 156 & 168 & 204 \\
     \hline
     \end{tabular}
   \vspace{0.3cm}
   \caption{Lower bounds for $n'_g$ and/or $n_g$ and their exact values}
   \label{tabela}
\end{table}

\section{A family of upper bound for $n_g$}

In this section, we prove an important relation between the genus, the depth and the multiplicity of a gapset. Moreover, we provide a family of upper bounds for the number $n_g$, which is another application of the identification of a gapset as a tiling of a $g$-board. As a consequence, we combine this identification with a result by Kaplan \cite{Kaplan} that ensures that the number of gapsets with multiplicity $m$ and genus $g$ agrees with a quasipolynomial in $g$ with degree $m - 2$ for large enough values of $g$. Throughout, the set of gapsets of genus $g$ and depth $q$ and the set of gapsets of genus $g$, depth $q$ and multiplicity $m$ are denoted by $\mathcal{F}(g,q)$ and $\mathcal{F}(g,q,m)$, respectively.
 
The first result gives an upper bound for the depth of a gapset in terms of its genus and its multiplicity.
 
 \begin{proposition}
Let $G$ be a gapset of genus $g$, depth $q$ and multiplicity $m$. Then $q \leq \left \lceil \frac{2g}{m} \right \rceil$.
\label{m}
\end{proposition}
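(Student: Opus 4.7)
The plan is to exploit the standard inequality $c(G) \le 2g$, which already appeared in the proof of Proposition \ref{dep}, together with the very definition $q = \lceil c/m \rceil$. So the whole argument reduces to a one-line monotonicity observation: since $\lceil \cdot /m \rceil$ is a non-decreasing function of its numerator, $c \le 2g$ immediately yields $q = \lceil c/m \rceil \le \lceil 2g/m \rceil$.

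Concretely, I would structure the proof in two short steps. First, recall (or give a one-sentence justification of) the inequality $c(G) \le 2g$. The justification is the classical gap-pairing argument: if $F = c - 1$ is the Frobenius number (the largest element of $G$), then for each $k \in [0, F]$ at least one of $k$ and $F - k$ must lie in $G$ (otherwise $F$ would be a sum of two elements of $\mathbb{N}_0 \setminus G$, forcing $F \notin G$), which gives $g \ge \lceil (F+1)/2 \rceil$ and hence $c = F + 1 \le 2g$. Since this is precisely the bound already invoked in Proposition \ref{dep}, I would just cite it rather than reprove it.

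Second, apply the definition $q(G) = \lceil c(G)/m \rceil$ and monotonicity of the ceiling function to conclude
\[
q(G) \;=\; \left\lceil \frac{c(G)}{m} \right\rceil \;\le\; \left\lceil \frac{2g}{m} \right\rceil.
\]

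There is essentially no obstacle here; the only thing to be careful about is making sure the reader sees that the bound $c \le 2g$ used in Proposition \ref{dep} transfers verbatim to the gapset setting (which it does, because a gapset is by definition the set of gaps of some numerical semigroup). If one wanted a self-contained proof avoiding Proposition \ref{dep}, one could equivalently argue from Corollary \ref{genus_depth_kunz}: writing $\Kunz(G) = (k_1, \ldots, k_{m-1})$ and letting $t$ be an index with $k_t = q$, the pairing $i \mapsto j_i$ on $[1,m-1]\setminus\{t\}$ with $i + j_i \equiv t \pmod{m}$ together with the Kunz inequalities in system (\ref{system}) gives $k_i + k_{j_i} \ge q - 1$ for each pair, hence $2(g - q) \ge (m-1)(q-1)$ and an even sharper bound $q \le (2g+m-1)/(m+1)$. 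But the ceiling-of-conductor route is cleaner and matches the statement exactly, so that is the one I would present.
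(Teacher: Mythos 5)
Your proposal is correct and follows essentially the same route as the paper: the paper's proof also simply invokes $c \leq 2g$, divides by $m$, and takes ceilings, so your argument (with the extra justification of $c \leq 2g$ and the optional Kunz-coordinate aside) matches it in substance.
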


\begin{proof}
Let $c$ be the conductor of $G$. Since $c \leq 2g$, we obtain $\frac{c}{m} \leq \frac{2g}{m}$ and the result follows.
\end{proof}

\begin{remark}
This bound is not sharp in general. For instance, if $m = 3$ and $g \equiv 2 \pmod{3}$, then there is not any gapset of genus $g$, multiplicity $3$ and depth $\left \lceil \frac{2g}{3} \right \rceil = \frac{2g + 2}{3}$ (see Theorem \ref{Fgq3}). On the other hand, if $m = 4$ and $g \geq 3$, then there is at least one gapset of genus $g$, multiplicity $4$ and depth $\left \lceil \frac{g}{2} \right \rceil$ (see Theorem \ref{Fgq4} and Table \ref{gq4}).
\end{remark}

We conclude that if $G$ is gapset of genus $g$, which is non-empty ($q \neq 0$), non-ordinary ($q \neq 1$) and non-hyperelliptic ($q \neq g$), then the multiplicity of $G$ is at least $3$. Hence, its depth lies in $[2, \lceil \frac{2g}{3} \rceil]$.  In particular, we have the following.

\begin{corollary}
If $g \geq 6$, then $\#\mathcal{F}(g,q) = 0$, for all $q \in \left[\left\lceil \frac{2g}{3} \right\rceil + 1, g-1 \right]$.
\label{zeros}
\end{corollary}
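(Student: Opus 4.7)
The plan is to combine Proposition \ref{m} with the easy classification of the extreme cases of multiplicity. The corollary really is saying: \emph{once you know} $q \leq \lceil 2g/m \rceil$, the only way to get a depth strictly greater than $\lceil 2g/3 \rceil$ is to have multiplicity $m \leq 2$, and $m = 2$ forces the hyperelliptic gapset with $q = g$. So I would not introduce any new machinery; the entire argument is a one-paragraph deduction from Proposition \ref{m} plus the recollection of the bounds $2 \leq m \leq g+1$ already used in the proof of Proposition \ref{dep}.

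Concretely, I would argue as follows. Fix $g \geq 6$ and $q \in [\lceil 2g/3 \rceil + 1, g-1]$, and suppose for contradiction that some $G \in \mathcal{F}(g,q)$ exists, with multiplicity $m$. From $q \geq \lceil 2g/3 \rceil + 1 \geq 2$ we have $q \geq 2$, so $G$ is not the ordinary gapset $[1,g]$, and hence $m \neq g+1$. From $q \leq g - 1 < g$ we have $q \neq g$, so $G$ is not the hyperelliptic gapset, and hence $m \neq 2$. This leaves $m \in [3, g]$. Applying Proposition \ref{m},
$$q \;\leq\; \left\lceil \frac{2g}{m} \right\rceil \;\leq\; \left\lceil \frac{2g}{3} \right\rceil,$$
which contradicts $q \geq \lceil 2g/3 \rceil + 1$. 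Hence $\mathcal{F}(g,q) = \emptyset$.

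The only remaining thing to comment on is the hypothesis $g \geq 6$: this is exactly the threshold at which the interval $[\lceil 2g/3 \rceil + 1,\,g-1]$ is non-empty, since $g - 1 \geq \lceil 2g/3 \rceil + 1$ amounts to $\lfloor g/3 \rfloor \geq 2$, i.e.\ $g \geq 6$. For $g < 6$ the statement is vacuous and nothing needs to be proved.

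I do not expect any real obstacle here; the only sub-point that deserves a line in the write-up is the justification that multiplicity $2$ forces the hyperelliptic gapset (and hence $q = g$) and multiplicity $g+1$ forces the ordinary gapset (and hence $q = 1$), both of which are standard and already implicitly used earlier in the paper.
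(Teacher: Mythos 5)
Your proposal is correct and is essentially the paper's own argument: the corollary is deduced there from the remark immediately preceding it, namely that a non-empty, non-ordinary, non-hyperelliptic gapset has multiplicity at least $3$, so Proposition \ref{m} forces $q \leq \left\lceil \frac{2g}{3} \right\rceil$. Your extra observations (excluding $m=2$ via the hyperelliptic case and noting that $g \geq 6$ is exactly when the interval is non-empty) match the intended reasoning and need no further justification.
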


In particular, it improves the upper bound obtained in Proposition \ref{dep} in the following way:

\begin{proposition}
If a non-empty and non-hyperelliptic gapset has genus $g$ and depth $q$, then $1 \leq q \leq \left\lceil \frac{2g}{3} \right\rceil$.
\label{dep2}
\end{proposition}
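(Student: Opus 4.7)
The plan is simple: combine Proposition \ref{m} with the observation that a non-empty, non-hyperelliptic gapset must have multiplicity at least $3$. The lower bound $q \geq 1$ is already forced by non-emptiness, since $q = 0$ corresponds only to the empty gapset (Proposition \ref{dep} already delivers $1 \leq q \leq g$ in the non-empty case).

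The core step is to verify that multiplicity $m = 2$ characterizes the hyperelliptic gapset among non-empty ones. First I would note that every non-empty gapset contains the integer $1$ (otherwise, taking $z$ minimal in $G$, the decomposition $z = 1 + (z-1)$ would give a smaller element of $G$), so $m \geq 2$ always. If $m = 2$, then the associated numerical semigroup $S = \N_0 \setminus G$ contains $2$, hence contains every even non-negative integer, so every element of $G$ is odd. The finiteness of $G$ together with the standard bound $c \leq 2g$ on the conductor then forces $G = \{1, 3, 5, \ldots, 2g-1\}$, i.e., the hyperelliptic gapset. So the non-hyperelliptic hypothesis yields $m \geq 3$.

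Putting the pieces together, under the hypotheses of the proposition we have $m \geq 3$, and Proposition \ref{m} gives
$$q \leq \left\lceil \frac{2g}{m} \right\rceil \leq \left\lceil \frac{2g}{3} \right\rceil,$$
finishing the proof. I do not expect any serious obstacle here: the characterization of the $m = 2$ case is essentially folklore (and implicit in the discussion preceding Corollary \ref{zeros}), and the final inequality is just monotonicity of $\lceil 2g/m \rceil$ in $m$.
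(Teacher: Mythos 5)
Your proposal is correct and follows essentially the same route as the paper, which obtains Proposition \ref{dep2} by noting that a non-empty, non-hyperelliptic gapset has multiplicity at least $3$ and then applying Proposition \ref{m} together with the monotonicity of $\left\lceil \frac{2g}{m} \right\rceil$ in $m$. Your only addition is to actually justify the folklore step that multiplicity $2$ forces the hyperelliptic gapset (all gaps odd, bounded by $c-1 \leq 2g-1$, and there are exactly $g$ such odd integers), which the paper asserts without proof.
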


By using a similar argument as in the proof of Proposition \ref{m}, we conclude that the genus $g$, the depth $q$ and the multiplicity $m$ of a gapset satisfies $q \geq \left \lceil \frac{g+1}{m} \right \rceil$. The next result improves this bound.

\begin{proposition}
Let $G$ be a non-empty gapset of genus $g$, depth $q$ and multiplicity $m$. Then $q \geq \left \lceil \frac{g}{m-1} \right \rceil$.
\label{mgqupper}
\end{proposition}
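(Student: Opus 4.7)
The plan is to exploit the pseudo Kunz coordinates established in Corollary \ref{genus_depth_kunz}, which give us both the genus and the depth of a gapset (or indeed any $m$-extension) in terms of the tuple $(k_1, k_2, \ldots, k_{m-1})$. The key observation is that the genus is the \emph{sum} of these $m-1$ nonnegative integers while the depth is their \emph{maximum}, and there is a trivial bound relating the sum and the maximum of a finite list of numbers.

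More precisely, I would argue as follows. Let $G$ be a non-empty gapset of genus $g$, depth $q$ and multiplicity $m$, and write $\Kunz(G)=(k_1,\ldots,k_{m-1})$. By Corollary \ref{genus_depth_kunz}, one has
\[
g \;=\; \sum_{i=1}^{m-1} k_i \qquad\text{and}\qquad q \;=\; \max\{k_i : 1\leq i\leq m-1\}.
\]
Since each $k_i$ is at most $q$, the sum of $m-1$ such terms is at most $(m-1)q$, giving $g \leq (m-1)q$, i.e.\ $q \geq g/(m-1)$. Because $q$ is an integer, we may round up the right-hand side to obtain $q \geq \lceil g/(m-1)\rceil$, which is the desired inequality.

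There is essentially no obstacle here: the entire content of the statement has been pushed into the dictionary between gapsets and their pseudo Kunz coordinates established in the previous section. The only minor point worth mentioning is that the hypothesis that $G$ be non-empty is used implicitly to guarantee $m\geq 2$ (so that the tuple $(k_1,\ldots,k_{m-1})$ is non-empty and the maximum is defined) and to ensure $g\geq 1$, so that the bound $\lceil g/(m-1)\rceil \geq 1$ is consistent with the general lower bound $q\geq 1$ from Proposition \ref{dep}. For this reason, the argument does not require any case analysis on the multiplicity, nor any appeal to the characterizing inequalities \eqref{system}: the sum-versus-maximum estimate is all that is needed.
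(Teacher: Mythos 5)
Your argument is correct and is essentially identical to the paper's proof: both use Corollary \ref{genus_depth_kunz} to express $g$ as the sum and $q$ as the maximum of the Kunz coordinates, deduce $g \leq (m-1)q$, and round up using integrality of $q$. No issues.
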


\begin{proof}
Let $(k_1, k_2, \ldots, k_{m-1})$ be the Kunz coordinates of $G$. From Corollary \ref{genus_depth_kunz}, \linebreak $g = \sum k_i \leq (m-1) \cdot q$ and the result follows.
\end{proof}

\begin{remark}
The bound presented in Proposition \ref{mgqupper} is sharp. In fact, let $m > 1$ and $g$ be positive integers such that $\left \lceil \frac{g}{m-1} \right \rceil = \frac{g+\epsilon}{m-1}$, where $\epsilon \in [0, m-2]$. Then the $m$-extension $G$ such that 
$$\Kunz(G) = \left(\underbrace{\frac{g+\epsilon}{m-1}, \frac{g+\epsilon}{m-1}, \ldots, \frac{g+\epsilon}{m-1}}_{m-1-\epsilon}, \underbrace{\frac{g+\epsilon}{m-1} - 1, \frac{g+\epsilon}{m-1} - 1, \ldots, \frac{g+\epsilon}{m-1} - 1}_{\epsilon}\right)$$
is a gapset of genus $g$, multiplicity $m$ and depth $\frac{g+\epsilon}{m-1}$.
\end{remark}

As a consequence of Propositions \ref{m} and \ref{mgqupper}, we obtain the following result.

\begin{corollary}
Let $G$ be a non-empty gapset of genus $g$, depth $q$ and multiplicity $m$. Then 
$$\left \lceil \frac{g}{m-1} \right \rceil \leq q \leq \left \lceil \frac{2g}{m} \right \rceil.$$
\label{mgq}
\end{corollary}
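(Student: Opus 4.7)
The statement is an immediate corollary of the two preceding propositions, so my plan is simply to invoke them and combine the inequalities.

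First I would recall that Proposition \ref{m} provides the upper bound $q \leq \left\lceil \frac{2g}{m} \right\rceil$, which follows from the basic estimate $c \leq 2g$ on the conductor of a gapset together with the definition $q = \lceil c/m \rceil$. Second, I would recall that Proposition \ref{mgqupper} provides the lower bound $q \geq \left\lceil \frac{g}{m-1} \right\rceil$, which is obtained by writing $g = \sum_{i=1}^{m-1} k_i$ from Corollary \ref{genus_depth_kunz} and bounding each $k_i$ by the maximum $q$, giving $g \leq (m-1)q$.

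Putting the two bounds together yields the desired chain of inequalities
\[
\left\lceil \frac{g}{m-1} \right\rceil \leq q \leq \left\lceil \frac{2g}{m} \right\rceil.
\]
Since both inequalities have already been established in the paper, there is essentially no obstacle here; the corollary is purely a restatement concatenating the two bounds. The only minor point worth verifying is that the hypothesis of non-emptiness (needed for Proposition \ref{mgqupper}) is present in the statement, which it is.
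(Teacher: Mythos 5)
Your proposal is correct and matches the paper exactly: the corollary is stated there as an immediate consequence of Propositions \ref{m} and \ref{mgqupper}, with no further argument needed. Your brief remark about the non-emptiness hypothesis (required for the lower bound via the Kunz coordinates) is the only point of substance, and you handle it correctly.
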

%
%

Next, we are interested in providing upper bounds for the number $n_g$. Let $M$ be a positive integer. Then one can write

\begin{equation}
n_g = \sum_{q \leq \left \lceil \frac{2g}{M+1} \right \rceil} \#\mathcal{F}(g,q) + \sum_{q > \left \lceil \frac{2g}{M+1} \right \rceil} \#\mathcal{F}(g,q),
\label{ng}
\end{equation}
where $\mathcal{F}(g,q)$ denotes the set of gapsets of genus $g$ and depth $q$.

In particular, if $M = 1$, then (\ref{ng}) becomes $n_g =   \sum_{q \leq g} \#\mathcal{F}(g,q)$. Thus we can use Proposition \ref{cotasup} to obtain the statement of Corollary \ref{cotan_g}, i.e., $n_g \leq\F^{(g)}_{g+1} = 2^{g-1}$. 
Next, we improve this last upper bound. 

\begin{theorem}
Let $g$ and $M > 1$ be positive integers. Then
$$n_g \leq \F_{g+1}^{(\left \lceil \frac{2g}{M+1} \right \rceil)} + \sum_{m=2}^{M} \sum_{q = \left \lceil \frac{2g}{M+1} \right \rceil + 1}^{\left \lceil \frac{2g}{m} \right \rceil} \#\mathcal{F}(g,q,m).$$
\label{uppern_g}
\end{theorem}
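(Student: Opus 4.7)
The plan is to start from the decomposition of $n_g$ already written down in equation (\ref{ng}),
\[
n_g = \sum_{q \leq \left\lceil \frac{2g}{M+1} \right\rceil} \#\mathcal{F}(g,q) + \sum_{q > \left\lceil \frac{2g}{M+1} \right\rceil} \#\mathcal{F}(g,q),
\]
and to bound the two sums separately. For the first sum I would simply note that it counts gapsets of genus $g$ and depth at most $\lceil 2g/(M+1) \rceil$, which is exactly $\#\Gamma(g, \lceil 2g/(M+1) \rceil)$; Proposition \ref{cotasup} then bounds it by $\F_{g+1}^{(\lceil 2g/(M+1) \rceil)}$.

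For the second sum the idea is to show that the multiplicities that can actually contribute are confined to the range $[2, M]$, so that the outer sum over $q$ can be rewritten as a double sum over $m$ and $q$ of $\#\mathcal{F}(g,q,m)$. Concretely, I would invoke Proposition \ref{m}: any gapset of multiplicity $m$ and depth $q$ satisfies $q \leq \lceil 2g/m \rceil$. If a gapset had multiplicity $m \geq M+1$, then $2g/m \leq 2g/(M+1)$, hence $q \leq \lceil 2g/m \rceil \leq \lceil 2g/(M+1) \rceil$, contradicting the assumption that $q > \lceil 2g/(M+1) \rceil$ indexing the second sum. Therefore every gapset counted in the second sum has $m \in [2, M]$, and for each such fixed $m$ the allowed depths lie precisely in $[\lceil 2g/(M+1) \rceil + 1, \lceil 2g/m \rceil]$, again by Proposition \ref{m}. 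Partitioning $\mathcal{F}(g,q)$ according to multiplicity yields
\[
\sum_{q > \left\lceil \frac{2g}{M+1} \right\rceil} \#\mathcal{F}(g,q) = \sum_{m=2}^{M} \sum_{q = \left\lceil \frac{2g}{M+1} \right\rceil + 1}^{\left\lceil \frac{2g}{m} \right\rceil} \#\mathcal{F}(g,q,m),
\]
and adding the two bounds gives the claim.

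There is no real obstacle; the content is just repackaging the depth-versus-multiplicity inequality of Proposition \ref{m} together with the tiling upper bound of Proposition \ref{cotasup}. The only point that requires a line of care is the monotonicity step $m \geq M+1 \Rightarrow \lceil 2g/m \rceil \leq \lceil 2g/(M+1) \rceil$, which follows from the fact that $x \mapsto \lceil 2g/x \rceil$ is non-increasing in $x$ on the positive integers. One should also check the edge case $M+1 > g+1$ (so that some multiplicities in $[2,M]$ may fail to occur for genus $g$), but this only makes the corresponding inner sums empty and the inequality still holds.
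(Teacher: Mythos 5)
Your proposal is correct and follows essentially the same route as the paper: the same decomposition of $n_g$ by depth at the threshold $\left\lceil \frac{2g}{M+1}\right\rceil$, the bound $\F_{g+1}^{(\lceil 2g/(M+1)\rceil)}$ on the low-depth part via Proposition \ref{cotasup}, and the depth--multiplicity inequality $q \leq \lceil 2g/m\rceil$ (Proposition \ref{m}, i.e.\ the upper half of Corollary \ref{mgq}) to confine the multiplicities in the high-depth part to $[2,M]$ and truncate the inner sum at $\lceil 2g/m\rceil$. Your explicit monotonicity remark and the empty-inner-sum edge case are fine details the paper leaves implicit, but they do not change the argument.
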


\begin{proof}
Let $M > 1$ and $g$ be positive integers and denote by $\mathcal{F}(g,q,m)$ the set of gapsets of genus $g$, depth $q$ and multiplicity $m$. By Proposition \ref{cotasup}, the first sum of (\ref{ng}) is bounded above by $\F_{g+1}^{(\left \lceil \frac{2g}{M+1} \right \rceil)}$; by Corollary \ref{mgq}, if $G \in \mathcal{F}(g,q)$, with $q > \left \lceil \frac{2g}{M+1} \right \rceil$, then the multiplicity of $G$ is at most $M$. Thus, one can write

$$\sum_{q > \left \lceil \frac{2g}{M+1} \right \rceil} \#\mathcal{F}(g,q) = \sum_{q > \left \lceil \frac{2g}{M+1} \right \rceil} \sum_{m=2}^{M} \#\mathcal{F}(g,q,m).$$
By changing the order of the sum and using the fact that the depth $q$ of a gapset of genus $g$ and multiplicity $m$ is at most $\left \lceil \frac{2g}{m} \right \rceil$, we conclude that
$$\sum_{q > \left \lceil \frac{2g}{M+1} \right \rceil} \#\mathcal{F}(g,q) = \sum_{m=2}^{M} \sum_{q = \left \lceil \frac{2g}{M+1} \right \rceil + 1}^{\left \lceil \frac{2g}{m} \right \rceil} \#\mathcal{F}(g,q,m),$$

and the result follows.
\end{proof}

The next example illustrates how we can use the formula obtained in Theorem \ref{uppern_g} in the cases $M = 2, M = 3$ and $M = 4$.

\begin{example}
Recall that there is exactly one gapset of genus $g \geq 1$ and depth $q = g$ (the hyperelliptic gapset, which has multiplicity 2). The first values for $M$ give the following upper bounds for $n_g$, if $g \geq 1$.
\begin{itemize}
  \item[(a)] $(M = 2): n_g \leq \F_{g+1}^{(\left \lceil \frac{2g}{3} \right \rceil)} + \sum_{q = \left \lceil \frac{2g}{3} \right \rceil + 1}^{g} \# \mathcal{F}(g,q,2) = \F_{g+1}^{(\left \lceil \frac{2g}{3} \right \rceil)} + 1.$
  \item[(b)] $(M = 3): n_g \leq \F_{g+1}^{(\left \lceil \frac{2g}{4} \right \rceil)} + \sum_{q = \left \lceil \frac{2g}{4} \right \rceil + 1}^{\left \lceil \frac{2g}{3} \right \rceil} \# \mathcal{F}(g,q,3) + \sum_{q = \left \lceil \frac{2g}{4} \right \rceil + 1}^{g} \# \mathcal{F}(g,q,2)$, i.e., $$n_g \leq \F_{g+1}^{(\left \lceil \frac{g}{2} \right \rceil)} + \sum_{q = \left \lceil \frac{g}{2} \right \rceil + 1}^{\left \lceil \frac{2g}{3} \right \rceil} \# \mathcal{F}(g,q,3) +1.$$
  \item[(c)] $(M = 4):$ \\
{\small $n_g \leq \F_{g+1}^{(\left \lceil \frac{2g}{5} \right \rceil)} + \sum_{q = \left \lceil \frac{2g}{5} \right \rceil + 1}^{\left \lceil \frac{g}{2} \right \rceil} \# \mathcal{F}(g,q,4) + \sum_{q = \left \lceil \frac{2g}{5} \right \rceil + 1}^{\left \lceil \frac{2g}{3} \right \rceil} \# \mathcal{F}(g,q,3) + \sum_{q = \left \lceil \frac{2g}{5} \right \rceil + 1}^{g} \# \mathcal{F}(g,q,2)$,}\\ 
i.e., $$n_g \leq \F_{g+1}^{(\left \lceil \frac{2g}{5} \right \rceil)} + \sum_{q = \left \lceil \frac{2g}{5} \right \rceil + 1}^{\left \lceil \frac{g}{2} \right \rceil } \# \mathcal{F}(g,q,4) + \sum_{q = \left \lceil \frac{g}{2} \right \rceil}^{\left \lceil \frac{2g}{3} \right \rceil } \# \mathcal{F}(g,q,3) +1.$$
\end{itemize}
\label{upper_casos}
\end{example}

In order to obtain those upper bounds explicitly, we shall provide formulas for $\#\mathcal{F}(g,q,m)$. However, it seems to be a difficult problem to compute $\#\mathcal{F}(g,q,m)$ for arbitrary values of $g, q$ and $m$. In section 7, we present explicit formulas in the cases $m = 3$ and $m = 4$.

Here, we give an alternative to construct a family of upper bounds for $n_g$. We point out that the number of gapsets with multiplicity $m$ and genus $g$, namely $N(m,g)$, is an upper bound for $\sum_{q > \left \lceil \frac{2g}{M+1} \right \rceil} \#\mathcal{F}(g,q,m)$, which is a consequence of Corollary \ref{mgq}. In particular,

\begin{equation}
n_g \leq \F_{g+1}^{(\left \lceil \frac{2g}{M+1} \right \rceil)} + \sum_{m=2}^{M} N(m, g).
\label{upper}
\end{equation}

The next result appears as Proposition 7 of \cite{Kaplan} and it indicates how the numbers $N(m,g)$ look like, at least for $g$ large enough. Recall that a quasipolynomial is a function \linebreak $f: \N \to \C$ such that there exists a positive integer $t$ and polynomials $f_0, f_1, \ldots, f_{t-1}$ such that $f(n) = f_i(n)$, if $n \equiv i \pmod{t}$. The integer $t$ is the period of the quasipolynomial and the degree of the quasipolynomial is the largest degree of the polynomials $f_i$.

\begin{lemma}[\cite{Kaplan}]
Let $N(m,g)$ be the number of gapsets of genus $g$ and multiplicity $m$. If $m > 1$ is a positive integer, then $N(m, g)$ agrees with a quasipolynomial of degree $m - 2$ with a period depending on $m$ for $g \gg 0$.
\label{quasipol}
\end{lemma}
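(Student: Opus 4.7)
The plan is to realize $N(m,g)$ as a parametric lattice-point count coming from Kunz coordinates, and then to invoke Ehrhart-type quasipolynomiality. By the bijection recalled in Section~3 between numerical semigroups of multiplicity $m$ and tuples $(k_1,\ldots,k_{m-1}) \in \N^{m-1}$ satisfying the system (\ref{system}), together with Corollary \ref{genus_depth_kunz} which identifies $g(G)=\sum_i k_i$, one has
$$
N(m,g)=\#\bigl\{(k_1,\ldots,k_{m-1})\in\N^{m-1}:\textstyle\sum_i k_i=g\text{ and }(\ref{system})\text{ holds}\bigr\}.
$$
Thus $N(m,g)$ counts the integer points on the affine slice $H_g=\{x\in\RR^{m-1}:\sum x_i=g\}$ of the rational polyhedron $K\subset\RR^{m-1}$ cut out by (\ref{system}).

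Next I would invoke a standard theorem from parametric lattice-point enumeration (see, e.g., Beck--Robins, \emph{Computing the Continuous Discretely}, or Stanley's \emph{Enumerative Combinatorics~I}, Ch.~4): for a rational polyhedron $K\subset\RR^d$ with recession cone $\mathrm{rec}(K)$ of dimension~$r$ and a rational linear form $\ell$ that is bounded below but not above on $K$, the slice-count $g\mapsto\#(K\cap\ell^{-1}(g)\cap\bZ^d)$ agrees, for $g\gg 0$, with a quasipolynomial in $g$ of degree $r-1$ and of period depending only on the rational defining data of $K$ and~$\ell$. Here $d=m-1$; the recession cone of the Kunz polyhedron is cut out by the homogenized inequalities (drop the constant ``$+1$''), and the diagonal direction $(1,1,\ldots,1)$ satisfies all of them strictly, forcing $\dim\mathrm{rec}(K)=m-1$. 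Therefore $N(m,g)$ is a quasipolynomial in $g$ of degree $m-2$ for $g\gg 0$, with period depending only on~$m$.

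The main subtlety is the constant ``$+1$'' appearing in the wrap-around inequalities $X_i+X_j+1\geq X_{i+j-m}$, which prevents $K$ from being a genuine cone: the slices $K\cap H_g$ do not simply scale as $g\cdot P$ for a fixed polytope $P$, so one cannot apply the elementary Ehrhart theorem directly. This is precisely why the quasipolynomial regime is only eventual and why the period is nontrivial --- the vertices of $K\cap H_g$ drift with $g$ in a piecewise-linear, $g$-periodic fashion, and the common denominator of their coordinates (which controls the period) is bounded by the maximal minors of the integer matrix encoding (\ref{system}). Carrying this out rigorously requires the parametric version of Ehrhart theory rather than its basic cone case; this is essentially the argument of Kaplan's Proposition~7.
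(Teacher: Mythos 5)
This lemma is not proved in the paper at all: it is quoted verbatim from Kaplan's Proposition~7 (\cite{Kaplan}), so there is no in-paper argument to compare against. Your sketch is, in substance, the standard route to that result and essentially the one Kaplan follows: identify $N(m,g)$ with the number of points of $\N^{m-1}$ on the slice $\sum_i k_i=g$ of the rational Kunz polyhedron cut out by (\ref{system}) (using the bijection of \cite{Rosales} and Corollary~\ref{genus_depth_kunz}), and then invoke parametric/Ehrhart-type lattice-point enumeration, noting that the translated inequalities $X_i+X_j+1\geq X_{i+j-m}$ make the polyhedron a translate-deformed cone rather than a cone, which is exactly why the quasipolynomiality is only eventual. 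So the approach is sound and correctly locates the key subtlety.

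One caveat on how you state the black-box theorem: ``$\ell$ bounded below but not above on $K$'' is not enough for the slice counts to be finite, let alone eventually quasipolynomial of degree $r-1$ (take $K=\{(x,y)\in\RR^2:x\geq 0\}$ and $\ell=x$: every slice contains infinitely many lattice points). The correct hypothesis is that $\ell$ is strictly positive on every nonzero vector of the recession cone, equivalently that the slices $K\cap\ell^{-1}(g)$ are bounded. In your application this does hold, because the constraints $X_i\geq 1$ force the recession cone of the Kunz polyhedron into the nonnegative orthant, where $\ell=\sum_i X_i$ is positive away from the origin; combined with your observation that $(1,\ldots,1)$ satisfies all homogenized inequalities strictly (so the recession cone is full-dimensional), this gives boundedness of the slices, eventual quasipolynomiality with period depending only on $m$, and growth of order $g^{m-2}$, hence degree exactly $m-2$. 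With that hypothesis repaired, your argument is a legitimate self-contained justification of Lemma~\ref{quasipol}, whereas the paper simply cites \cite{Kaplan}.
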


The next result is an immediate consequence of Lemma \ref{quasipol} and (\ref{upper}).

\begin{corollary}
\label{cor610} Let $M$ be a positive integer. Then there exists a quasipolynomial $p_M$ of degree $M-2$ such that

$$n_g \leq \F_{g+1}^{(\left \lceil \frac{2g}{M+1} \right \rceil)} + p_M(g),$$

for $g \gg 0$.
\end{corollary}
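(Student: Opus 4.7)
The plan is to combine the upper bound (\ref{upper}) with Lemma \ref{quasipol} termwise and then verify that what remains is a quasipolynomial of the claimed degree. Concretely, write
\[
n_g \;\leq\; \F_{g+1}^{(\lceil 2g/(M+1)\rceil)} \;+\; \sum_{m=2}^{M} N(m,g),
\]
so it suffices to show that $\sum_{m=2}^{M} N(m,g)$ agrees with a quasipolynomial of degree $M-2$ for $g$ large enough, and then take that sum to be $p_M$.

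For each $m \in [2,M]$, Lemma \ref{quasipol} supplies an integer $g_m$, a period $t_m$, and polynomials $f_0^{(m)}, f_1^{(m)}, \ldots, f_{t_m-1}^{(m)}$ of degree at most $m-2$ (with at least one of them of degree exactly $m-2$) such that $N(m,g) = f_{g \bmod t_m}^{(m)}(g)$ for all $g \geq g_m$. First I would set $g_0 := \max\{g_2, \ldots, g_M\}$ and $T := \mathrm{lcm}(t_2, \ldots, t_M)$. For any $g \geq g_0$ and any residue $r \in [0, T-1]$, the value $\sum_{m=2}^{M} N(m,g)$ depends only on $g \bmod T$ through a polynomial expression in $g$, namely $\sum_{m=2}^M f_{g \bmod t_m}^{(m)}(g)$, which I would collect into a single polynomial $p_{M,r}(g)$ indexed by $r = g \bmod T$.

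The resulting function $p_M(g) := p_{M, g \bmod T}(g)$ is by construction a quasipolynomial of period dividing $T$. Its degree is the maximum of the degrees of the summands, and since each $N(m,g)$ has degree at most $m-2$ while $N(M,g)$ has degree exactly $M-2$, this maximum equals $M-2$. Thus for all $g \geq g_0$,
\[
n_g \;\leq\; \F_{g+1}^{(\lceil 2g/(M+1) \rceil)} + \sum_{m=2}^{M} N(m,g) \;=\; \F_{g+1}^{(\lceil 2g/(M+1) \rceil)} + p_M(g),
\]
which is the assertion.

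The only mildly delicate point is bookkeeping: one must check that adding finitely many quasipolynomials of possibly different periods produces a quasipolynomial whose period divides the least common multiple of the individual periods and whose degree is the maximum of the individual degrees. This is standard, and the only reason I single it out is that the degree claim fails if $N(M,g)$ happened to have degree strictly less than $M-2$; however, Lemma \ref{quasipol} states that the degree is exactly $M-2$, so no further work is required. Everything else is routine substitution into (\ref{upper}).
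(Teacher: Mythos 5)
Your proposal is correct and follows the same route as the paper, which simply declares the corollary an immediate consequence of inequality (\ref{upper}) and Lemma \ref{quasipol}; your only addition is the standard bookkeeping (common period via lcm, degree equal to the maximum since only $N(M,g)$ contributes degree $M-2$), which is exactly the implicit content of the paper's one-line justification.
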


For instance, it is well known that $N(2, g) = 1$, for all $g \geq 1$, $N(3,g) = \left \lfloor \frac{g}{3}\right \rfloor + 1$, for all $g \geq 2$ (see \cite{B_GS_P}) and $N(4, g) = \left \lfloor \frac{g^2 + 6g}{12} \right \rfloor$, for all $g \geq 4$ (see \cite{Alhajjar}). From (\ref{upper}), with $M = 4$, we conclude that

$$n_g \leq \F_{g+1}^{(\left \lceil \frac{2g}{5} \right \rceil)} + \left \lfloor \frac{g^2 + 6g}{12} \right \rfloor + \left \lfloor \frac{g}{3}\right \rfloor + 2,$$

for all $g \geq 4$, which is a better upper bound for $n_g$ compared to the one obtained in Corollary \ref{cotan_g}.

\section{Gapsets with given genus, depth and small multiplicity}
\label{small}

In this section, we provide explicit formulas for the number of gapsets with fixed genus, depth and multiplicity $3$ or $4$. Recall that $\mathcal{F}(g,q,m)$ denotes the set of gapsets of genus $g$, depth $q$ and multiplicity $m$. We point out that some authors have considered studying gapsets/numerical semigroups with small multiplicities (see \cite{EF2, RPA, Karakas}).

First, we focus on the number of gapsets with fixed genus, fixed depth and multiplicity $3$ and we present a formula for $\#\mathcal{F}(g,q, 3)$.

\begin{theorem}
Let $g \geq 2$ and $q$ be positive integers. Then
$$
\#\mathcal{F}(g,q,3) = 
\begin{cases}
0, \text{ if } q < \frac{g}{2} \\
1, \text{ if } q = \frac{g}{2} \\
2, \text{ if } \frac{g+1}{2} \leq q \leq \frac{2g}{3} \\
1, \text{ if } q = \frac{2g + 1}{3} \\
0, \text{ if } q > \frac{2g + 1}{3}.
\end{cases}
$$
\label{Fgq3}
\end{theorem}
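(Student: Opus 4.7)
The plan is to translate the question into counting Kunz coordinates and then enumerate pairs by which coordinate is the maximum. By the characterization of Kunz coordinates of gapsets (the system~(\ref{system}) together with the bijection given in Theorem~\ref{surjective}), a gapset with multiplicity $3$ corresponds bijectively to a pair $(k_1,k_2) \in \mathbb{N}^2$ satisfying the two inequalities
\[
2k_1 \geq k_2 \qquad \text{and} \qquad 2k_2 + 1 \geq k_1,
\]
obtained by specializing (\ref{system}) to $m=3$. By Corollary~\ref{genus_depth_kunz}, the genus equals $k_1+k_2$ and the depth equals $\max(k_1,k_2)$. Thus $\#\mathcal{F}(g,q,3)$ is exactly the number of pairs $(k_1,k_2) \in \mathbb{N}^2$ satisfying $k_1+k_2=g$, $\max(k_1,k_2)=q$, and the two inequalities above.

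Next I would split into two cases according to which coordinate is the maximum. In Case A, $(k_1,k_2)=(q,g-q)$; the first inequality $2q \geq g-q$ is automatic from $q \geq g/2$, while the second inequality $2(g-q)+1 \geq q$ is equivalent to $q \leq (2g+1)/3$. In Case B, $(k_1,k_2)=(g-q,q)$; here $2(g-q) \geq q$ is equivalent to $q \leq 2g/3$, and $2q+1 \geq g-q$ follows from $q \geq g/2$. The two cases give the same pair precisely when $k_1=k_2$, i.e.\ when $g$ is even and $q=g/2$; otherwise they produce two distinct pairs.

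Combining these, for $q<g/2$ neither case is feasible, yielding $0$. For $q=g/2$ (possible only if $g$ is even) the unique pair $(g/2,g/2)$ satisfies both inequalities since $g/2 \leq 2g/3 \leq (2g+1)/3$, yielding $1$. For $(g+1)/2 \leq q \leq 2g/3$, both cases produce distinct valid pairs, yielding $2$. For $q=(2g+1)/3$ (possible only if $g \equiv 1 \pmod 3$), Case B fails because $(2g+1)/3>2g/3$, but Case A still holds with equality, yielding $1$. For $q>(2g+1)/3$, Case A also fails and we get $0$.

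The only step requiring care is the bookkeeping of the inequalities in the borderline cases $q=g/2$, $q=\lfloor 2g/3 \rfloor$, and $q=(2g+1)/3$, and checking that the ranges in the statement correctly describe the integer values of $q$ as $g$ ranges over the residues modulo $6$; this is routine but is the one place where a slip could occur. Everything else is a direct translation via the Kunz coordinates.
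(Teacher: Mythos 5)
Your proposal is correct and takes essentially the same route as the paper: pass to Kunz coordinates, specialize the system (\ref{system}) to $m=3$, and count pairs $(k_1,k_2)$ with $k_1+k_2=g$ and $\max(k_1,k_2)=q$ according to which coordinate attains the depth. The paper's proof is the same case analysis (it just treats $(q,q)$ as a separate case and makes the constraint $g-q\in[1,q-1]$ explicit), so nothing substantive is missing.
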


\begin{proof}
From Corollary \ref{mgq}, we conclude that $\#\mathcal{F}(g,q, 3) = 0$, if $q > \frac{2g + 2}{3}$ or $q < \frac{g}{2}$. Now we study gapsets with depth $q \in [\frac{g}{2}, \frac{2g+2}{3}]$. Let $G$ be a gapset with multiplicity $3$, depth $q$, genus $g$ and $\Kunz(G) = (k_1, k_2)$. Thus, $k_1 + k_2 = g$, $k_1, k_2 \in [1, q]$ and either $k_1 = q$ or $k_2 = q$. Furthermore, $(k_1, k_2)$ must satisfy the system (\ref{system}):

\begin{equation}
\begin{cases}
\begin{array}{lrl}
(\mathbf{I}) & 2X_1 & \geq X_2 \\
(\mathbf{II}) &  2X_2 + 1 & \geq  X_1.
\end{array}
\end{cases}
\label{m=3}
\end{equation}

There is exactly one gapset with genus $2$ and multiplicity $3$, which has Kunz coordinates $(1,1)$ and depth $1$. There are exactly two gapsets with genus $3$ and multiplicity $3$, which have Kunz coordinates $(1,2)$ and $(2,1)$ and depth $2$. Hence, the formula holds true for those cases. Now, suppose that $g \geq 4$.

\textbf{Case 1}: $\Kunz(G) = (q,q)$.

In this case, the inequalities $(\mathbf{I})$ and $(\mathbf{II})$ are satisfied. Thus, we have exactly one gapset with depth $q$ and genus $g = 2q$. 

\textbf{Case 2}: $\Kunz(G) = (q, g - q)$, with $g - q \in [1, q - 1]$.

In this case, the inequality $(\mathbf{I})$ is satisfied, and inequality $(\mathbf{II})$ is satisfied if, and only if, $q \leq \frac{2g+1}{3}$. Moreover, since $g - q \in [1, q-1]$, then $q \geq \frac{g+1}{2}$ and $q \leq g - 1$. Hence, for each pair $(g, q)$ such that $\frac{g+1}{2} \leq q \leq \min\left\{\frac{2g+1}{3}, g-1\right\} = \frac{2g+1}{3}$, there is exactly one gapset of genus $g$, depth $q$ and Kunz coordinates $(q, g - q)$.

\textbf{Case 3}: $\Kunz(G) = (g - q, q)$, with $g - q \in [1, q - 1]$.

In this case, the inequality $(\mathbf{II})$ is satisfied, and inequality $(\mathbf{I})$ is satisfied if, and only if, $q \leq \frac{2g}{3}$. Moreover, since $g - q \in [1, q-1]$, then $q \geq \frac{g+1}{2}$ and $q \leq g - 1$. Hence, for each pair $(g, q)$ such that $\frac{g+1}{2} \leq q \leq \min\left\{\frac{2g}{3}, g-1\right\} = \frac{2g}{3}$, there is exactly one gapset of genus $g$, depth $q$ and Kunz coordinates $(g - q, q)$.

By summing up the values obtained in cases 1, 2 and 3, we obtain the announced formula.

\end{proof}

Now we provide a formula for the number of gapsets with multiplicity $4$, genus $g$ and depth $q$. The main key to obtain it is again using the Kunz coordinates of a gapset.

\begin{theorem}
Let $g \geq 7$ and $q$ be positive integers. Then
$$
\#\mathcal{F}(g,q,4) = 
\begin{cases}
\begin{array}{cl}
0, & \text{ if } q < \frac{g}{3} \\
1, & \text{ if } q = \frac{g}{3} \\
3(3q - g), & \text{ if } \frac{g}{3} < q \leq \frac{2g}{5} \\
\frac{3g+4}{5}, & \text{ if } q = \frac{2g + 1}{5} \\
\frac{3g+8}{5}, & \text{ if } q = \frac{2g + 2}{5} \\
\frac{3g+12}{5}, & \text{ if } q = \frac{2g + 3}{5} \\
\frac{3g+11}{5}, & \text{ if } q = \frac{2g + 4}{5} \text{ and } g \neq 8 \\
\lfloor \frac{g + 2q}{3} \rfloor + 2, & \text{ if } \frac{2g+4}{5} < q \leq \frac{g-1}{2} \\
\lfloor \frac{2g + 3}{3} \rfloor, & \text{ if } q = \frac{g}{2} \\
\lfloor \frac{g}{3} \rfloor, & \text{ if } q = \frac{g+1}{2} \\
0, & \text{ if } q > \frac{g + 1}{2}.
\end{array}
\end{cases}
$$
\label{Fgq4}
\end{theorem}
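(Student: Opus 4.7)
The plan is to mirror the Kunz-coordinate strategy used in Theorem \ref{Fgq3}, but with three coordinates subject to four inequalities. For a gapset $G$ of multiplicity $4$, genus $g$ and depth $q$, the Kunz coordinates $\Kunz(G) = (k_1, k_2, k_3)$ are positive integers satisfying, by Corollary \ref{genus_depth_kunz}, $k_1 + k_2 + k_3 = g$ and $\max_i k_i = q$. The system (\ref{system}) with $m = 4$ reduces to
\[
2k_1 \geq k_2, \qquad k_1 + k_2 \geq k_3, \qquad k_2 + k_3 + 1 \geq k_1, \qquad 2k_3 + 1 \geq k_2.
\]
Counting $\#\mathcal{F}(g,q,4)$ therefore amounts to counting integer triples in $[1,q]^3$ summing to $g$, with maximum $q$, that satisfy these four inequalities. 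Corollary \ref{mgq} gives the trivial vanishing branches $q < g/3$ and $q > (g+1)/2$; the boundary case $q = g/3$ forces $k_1 = k_2 = k_3 = q$, which satisfies every inequality, yielding the count~$1$.

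Next I would change variables to $a_i := q - k_i \in [0, q-1]$, so that $a_1 + a_2 + a_3 = 3q - g$, the depth condition becomes $\min_i a_i = 0$, and the four inequalities become
\[
a_2 \geq 2a_1 - q, \qquad a_3 \geq a_1 + a_2 - q, \qquad a_2 + a_3 \leq q + 1 + a_1, \qquad 2a_3 \leq q + 1 + a_2.
\]
A quick estimate shows that when $q \leq 2g/5$ one has $a_i \leq 3q - g \leq q/2$, so each of the four inequalities is automatic. The count then reduces to the number of non-negative integer triples summing to $3q - g$ with at least one zero, which by inclusion-exclusion equals $\binom{3q-g+2}{2} - \binom{3q-g-1}{2} = 3(3q-g)$; this gives the third branch.

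For the transition regime $2g/5 < q \leq (g-1)/2$, I would identify which of the four inequalities first binds. The constraint $a_2 \geq 2a_1 - q$ starts to cut configurations when $a_1 > q/2$, that is, when $3q - g > q/2$; the symmetric constraint $2a_3 \leq q+1+a_2$ starts to cut slightly later. A direct count of the excluded configurations, carried out separately for each residue $q = (2g+k)/5$ with $k \in \{1,2,3,4\}$, produces the four announced formulas $(3g+4)/5,\ (3g+8)/5,\ (3g+12)/5,\ (3g+11)/5$. For the generic interval $(2g+4)/5 < q \leq (g-1)/2$, once the dominant binding constraints are identified, a uniform inclusion-exclusion collapses to $\lfloor (g+2q)/3 \rfloor + 2$. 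The boundary depths $q = g/2$ and $q = (g+1)/2$, where $g - q$ is small and the admissible triples degenerate (typically with one or two $k_i$ equal to $1$), are handled by direct enumeration to give $\lfloor (2g+3)/3 \rfloor$ and $\lfloor g/3 \rfloor$, respectively.

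The main obstacle is the delicate bookkeeping in the transition regime: multiple inequalities bind simultaneously, their overlap depends on the residue of $g$ modulo $5$ and the parity of $q$, and floor/ceiling arithmetic has to be tracked carefully across several subcases. The isolated exception $g = 8,\ q = 4$ (where $q = (2g+4)/5$ yet the generic correction overcounts by one because the configuration $(k_1,k_2,k_3)=(1,3,4)$ fails a binding inequality in a way that only occurs at this small size) has to be checked by hand, and this is precisely what the hypothesis $g \neq 8$ in the fourth branch records; the overall restriction $g \geq 7$ rules out further small-genus anomalies that would otherwise require additional base-case verification.
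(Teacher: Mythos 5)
Your setup is the same as the paper's: Kunz coordinates $(k_1,k_2,k_3)$ with $k_1+k_2+k_3=g$, $\max_i k_i=q$, subject to the four inequalities of the system (\ref{system}) for $m=4$, with the vanishing branches and $q=g/3$ handled via Corollary \ref{mgq}. Your treatment of the regime $q\leq \frac{2g}{5}$ is correct and in fact a little cleaner than the paper's: the substitution $a_i=q-k_i$, the observation that $a_i\leq 3q-g\leq q/2$ makes all four inequalities (and the constraint $k_i\geq 1$) automatic, and the inclusion--exclusion count $\binom{3q-g+2}{2}-\binom{3q-g-1}{2}=3(3q-g)$ is a genuine shortcut compared with the paper's case split on how many coordinates equal $q$.

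However, for everything past $q=\frac{2g}{5}$ --- which is most of the theorem --- you have only a plan, not a proof. The statements ``a direct count of the excluded configurations \ldots produces the four announced formulas'' and ``a uniform inclusion-exclusion collapses to $\lfloor\frac{g+2q}{3}\rfloor+2$'' assert the answers without deriving them; the same goes for the boundary depths $q=\frac{g}{2}$ and $q=\frac{g+1}{2}$. This is exactly where the paper does its real work: it splits according to which coordinate equals $q$ (Cases 2.1--2.3 and 3.1--3.3), and in each case counts lattice points of a line $X_i+X_j=g-q$ clipped to a square or rectangle, obtaining counts such as $3q-g-1$, $\lfloor\frac{4q-g+1}{3}\rfloor$, $g-2q+1$, $\lfloor\frac{4q-g}{3}\rfloor$ and $\lfloor\frac{g}{3}\rfloor$ on explicit ranges of $q$, and then sums them (using $\lfloor\frac{a}{3}\rfloor+\lfloor\frac{a+1}{3}\rfloor=\lfloor\frac{2a}{3}\rfloor$) to get the displayed branches. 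None of that bookkeeping appears in your proposal, so the formulas for $q=\frac{2g+k}{5}$ ($k=1,\dots,4$), for $\frac{2g+4}{5}<q\leq\frac{g-1}{2}$, and for $q=\frac{g}{2},\frac{g+1}{2}$ are unproved. Your diagnosis of the $g=8$, $q=4$ exception is also off: the discrepancy is not that $(k_1,k_2,k_3)=(1,3,4)$ fails inequality $2k_1\geq k_2$ (it fails that inequality for generic reasons, at any size); it is that $q=\frac{2g+4}{5}$ and $q=\frac{g}{2}$ coincide there, and the configuration $(q,\,g-2q,\,q)$ that contributes $1$ to the generic $\frac{3g+11}{5}$ count requires $g-2q\geq 1$, i.e.\ in your variables $a_2\leq q-1$, which fails precisely when $g=8$; hence the count drops by one to $\lfloor\frac{2g+3}{3}\rfloor=6$.
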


\begin{proof}
From Corollary \ref{mgq}, we conclude that $\#\mathcal{F}(g,q, 4) = 0$, if $q > \frac{g+1}{2}$ or $q < \frac{g}{3}$. Now we study gapsets with depth $q \in [\frac{g}{3}, \frac{g+1}{2}]$. Let $G$ be a gapset with multiplicity $4$, genus $g$, depth $q$ and $\Kunz(G) = (k_1, k_2, k_3)$. Recall that $k_1 + k_2 + k_3 = g, k_1, k_2, k_3 \in [1, q]$ and at least one of the coordinates is equal to $q$. Moreover, $(k_1, k_2, k_3)$ must satisfy the system (\ref{system}):

\begin{equation}
\begin{cases}
\begin{array}{lrl}
(\mathbf{A}) &  2X_1 & \geq X_2 \\
(\mathbf{B}) & X_1 + X_2 & \geq X_3 \\
(\mathbf{C}) & X_2 + X_3 + 1 & \geq X_1 \\
(\mathbf{D}) & 2X_3 + 1 & \geq X_2.
\end{array}
\end{cases}
\label{m=4}
\end{equation}
To avoid repetitions, we count in cases, as follows. 

\textbf{Case 1}: $\Kunz(G) = (q, q, q)$.

In this case, the inequalities $(\mathbf{A})$, $(\mathbf{B})$, $(\mathbf{C})$ and $(\mathbf{D})$ are satisfied. Thus, we have exactly one gapset with depth $q$ and genus $g = 3q$. 

\textbf{Case 2}: Exactly two of the coordinates of $\Kunz(G)$ are equal to $q$.

If exactly two of the coordinates of $\Kunz(G)$ are equal to $q$, then the other must be $g - 2q \in [1, q - 1]$. In particular, it follows that $\frac{g+1}{3} \leq q \leq \frac{g-1}{2}$. 

\textbf{Case 2.1}: $\Kunz(G) = (q, q, g - 2q)$, with $g - 2q \in [1, q-1]$.

In this case, the inequalities $(\mathbf{A})$, $(\mathbf{B})$ and $(\mathbf{C})$ are satisfied, and inequality $(\mathbf{D})$ is satisfied if, and only if, $q \leq \frac{2g+1}{5}$. Hence, for each pair $(g, q)$ such that $\frac{g+1}{3} \leq q \leq \min\left\{\frac{g-1}{2}, \frac{2g+1}{5}\right\}$, there is exactly one gapset with genus $g$, depth $q$ and Kunz coordinates $(q, q, g - 2q)$. Observe that if $g \geq 7$, then $\min\left\{\frac{g-1}{2}, \frac{2g+1}{5}\right\} = \frac{2g+1}{5}$.

\textbf{Case 2.2}: $\Kunz(G) = (q, g - 2q, q)$, with $g - 2q \in [1, q-1]$.

In this case, the inequalities $(\mathbf{A})$, $(\mathbf{B})$,$(\mathbf{C})$ and $(\mathbf{D})$ are satisfied. Hence, for each pair $(g, q)$ such that $\frac{g+1}{3} \leq q \leq \frac{g-1}{2}$, there is exactly one gapset with genus $g$, depth $q$ and Kunz coordinates $(q, g - 2q, q)$.

\textbf{Case 2.3}: $\Kunz(G) = (g - 2q, q, q)$, with $g - 2q \in [1, q-1]$.

In this case, the inequalities $(\mathbf{B})$, $(\mathbf{C})$ and $(\mathbf{D})$ are satisfied, and inequality $(\mathbf{A})$ is satisfied if, and only if, $q \leq \frac{2g}{5}$. Hence, for each pair $(g, q)$ such that $\frac{g+1}{3} \leq q \leq \min\left\{\frac{g-1}{2}, \frac{2g}{5}\right\}$, there is exactly one gapset with genus $g$, depth $q$ and Kunz coordinates $(g - 2q, q, q)$. Observe that if $g \geq 5$, then $\min\left\{\frac{g-1}{2}, \frac{2g}{5}\right\} = \frac{2g}{5}$.

\textbf{Case 3}: Exactly one of the coordinates of $\Kunz(G)$ is equal to $q$.

If exactly one of the coordinates is equal to $q$, then the others must lie on $[1, q-1]$ and their sum must be $g - q$. 

\textbf{Case 3.1}: $\Kunz(G) = (q, k_2, k_3)$, with $k_2 + k_3 = g - q$ and  $k_2, k_3 \in [1, q-1]$.

Observe that $g - q = k_2 + k_3 \leq 2q - 2$. Thus, if $q < \frac{g+2}{3}$, then there is not any solution. Now, suppose that $\frac{g+2}{3} \leq q \leq \frac{g+1}{2}$. In this case, the inequalities ($\mathbf{A}$), ($\mathbf{B}$) and ($\mathbf{C}$) are verified. Hence, we only have to check how many integer solutions $(k_2, k_3) \in [1, q-1] \times [1, q-1]$ are there for

\begin{equation}
\begin{cases}
\begin{array}{rl}
2X_3 + 1 & \geq X_2 \\
X_2 + X_3 & = g - q.
\end{array}
\end{cases}
\label{m=4,k1=q}
\end{equation}
The integer solutions of (\ref{m=4,k1=q}) lie in the line $X_2 + X_3 = g - q$ and we can count them by analysing three sub-cases, which depend on the relative position of the square $[1,q-1]^2$ and the lines $X_2 = 2X_3 + 1$ and $X_2 + X_3 = g - q$. For the first one, let $\frac{g+2}{3} \leq q \leq \frac{2g+4}{5}$. Then there are $3q - g - 1$ solutions given by $\left(j, g - q - j\right)$, where $j \in \left [g - 2q + 1, q - 1 \right]$. For the second sub-case, let $\frac{2g+4}{5} < q \leq \frac{g}{2}$. Then there are $\left \lfloor \frac{4q - g + 1}{3} \right \rfloor$ solutions given by $\left(j, g - q - j\right)$, where $j \in \left [g - 2q + 1, \frac{2g - 2q + 1}{3} \right]$. For the third sub-case, let $q = \frac{g+1}{2}$. Then there are $\left \lfloor \frac{g}{3} \right \rfloor$ solutions given by $\left(j, \frac{g-1}{2} - j\right)$, where $j \in \left [1, \frac{g}{3} \right]$.

\textbf{Case 3.2}: $\Kunz(G) = (k_1, q, k_3)$, with $k_1 + k_3 = g - q$ and  $k_1, k_3 \in [1, q-1]$.

Observe that $g - q = k_1 + k_3 \leq 2q - 2$. Thus, if $q < \frac{g+2}{3}$, then there is not any solution. Now, suppose that $\frac{g+2}{3} \leq q \leq \frac{g+1}{2}$. In this case, the inequalities ($\mathbf{B}$) and ($\mathbf{C}$) are verified. The inequalities ($\mathbf{A}$) and ($\mathbf{D}$) become $X_1 \geq \frac{q}{2}$ and $X_3 \geq \frac{q-1}{2}$. Hence, we only have to check how many integer solutions $(k_1, k_3) \in \left[\frac{q}{2}, q-1\right] \times \left[\frac{q-1}{2}, q-1\right]$ are there for $X_1 + X_3 = g - q$. 

We count its integer solutions by analysing three sub-cases, which depend on the relative position of the rectangle  $\left[\frac{q}{2}, q-1\right] \times \left[\frac{q-1}{2}, q-1\right]$ and the line $X_1 + X_3 = g - q$. For the first sub-case, let $\frac{g+2}{3} \leq q \leq \frac{2g+2}{5}$. Then there are $3q - g - 1$ solutions given by $\left(j, g - q - j\right)$, where $j \in \left [g - 2q + 1, q-1 \right]$. For the second sub-case, let $\frac{2g+2}{5} < q \leq \frac{g}{2}$. Then there are $g-2q+1$ solutions given by $\left(j, g - q - j\right)$, where $j \in \left [\frac{q}{2}, g - \frac{3q-1}{2} \right]$. For the third sub-case, let $q = \frac{g+1}{2}$. In this sub-case, $\frac{g-1}{2} = g - q = k_1+k_3 \geq \frac{q}{2} + \frac{q-1}{2} = \frac{g}{2}$, which is a contradiction. Hence, there is not integer any solution in this sub-case.



\textbf{Case 3.3}: $\Kunz(G) = (k_1, k_2, q)$, with $k_1 + k_2 = g - q$ and  $k_1, k_2 \in [1, q-1]$.

Observe that $g - q = k_1 + k_2 \leq 2q - 2$. Thus, if $q < \frac{g+2}{3}$, then there is not any solution. Now, suppose that $\frac{g+2}{3} \leq q \leq \frac{g+1}{2}$. In this case, the inequalities ($\mathbf{C}$) and ($\mathbf{D}$) are verified and the inequality ($\mathbf{B}$) is verified if, and only if, $q \leq \frac{g}{2}$. Hence, we only have to check how many integer solutions $(k_1, k_2) \in [1, q-1] \times [1, q-1]$ are there for

\begin{equation}
\begin{cases}
\begin{array}{rl}
2X_1 & \geq X_2 \\
X_1 + X_2 & = g - q.
\end{array}
\end{cases}
\label{m=4,k3=q}
\end{equation}

The integer solutions of (\ref{m=4,k3=q}) lie in the line $X_1+X_2 = g - q$ and we can count them by analysing two sub-cases, which depend on the relative position of the square $[1,q-1]^2$ and the lines $X_2 = 2X_1$ and $X_1 + X_2 = g - q$. For the first one, let $\frac{g+2}{3} \leq q \leq \frac{2g+3}{5}$. Then there are $3q - g - 1$ solutions given by $\left(g - q - j, j\right)$, where $j \in \left [g - 2q + 1, q - 1 \right]$. For the second sub-case, let $\frac{2g+3}{5} < q \leq \frac{g}{2}$. Then there are $\left \lfloor \frac{4q - g}{3} \right \rfloor$ solutions given by $\left(g - q - j, j\right)$, where $j \in \left [g - 2q + 1, \frac{2g - 2q}{3} \right]$.


Now, for $g \geq 7$, we only have to sum the values obtained to complete the proof. Here we notice that the case $g = 8$, $q = 4$ deserves a special care, because $\frac{2 \cdot 8 + 4}{5} = \frac{8}{2}$. In particular, we have to use the formula with $q = \frac{g}{2}$ and not with $q = \frac{2g+4}{5}$, since there is not any gapset in the Case 2.2 with $g = 8$ and $q = 4$. Moreover, we used the fact that for an integer $a$, it holds true that $\left \lfloor \frac{a}{3} \right \rfloor + \left \lfloor \frac{a+1}{3} \right \rfloor =\left \lfloor \frac{2a}{3} \right \rfloor$.
\end{proof}

\begin{remark}
If $g \leq 2$, then $\#\mathcal{F}(g,q,4) = 0$. If $3 \leq g \leq 6$, then the values $\#\mathcal{F}(g,q,4)$ can be obtained with the method presented in the proof of Theorem \ref{Fgq4} and also with the package \texttt{numericalsgps} \cite{GAP} in GAP. We present the values of $\#\mathcal{F}(g,q,4)$, for $g \leq 12$ in Table \ref{gq4}.
\end{remark}

\setlength{\tabcolsep}{0.3em} 
{\renewcommand{\arraystretch}{1.3}
\begin{table}
\begin{tabular}{|c|c c c c c c c c c c|}
 \hline
\diagbox[height=1.1cm]{$q$}{$g$} & 3 & 4 & 5 & 6 & 7 & 8 & 9 & 10 & 11 & 12  \\
\hline
1 & 1 &  &  &  &  & & & & &   \\
\hline
2 &  & 3 & 3 & 1 &  & & & & &    \\
\hline
3 &  &  & 1 & 5 & 5 & 3 & 1 & & &  \\
\hline
4 &  &  &  & &  2 & 6 & 7 & 6 & 3 & 1 \\
\hline
5 &  &  &  &  &  & & 3 & 7 & 9 & 8 \\
\hline
6 &  &  &  &  &  & & & & 3 & 9 \\   
\hline \hline
$N(4,g)$ & 1 & 3 & 4 & 6 & 7 & 9 & 11 & 13 & 15 & 18 \\   
\hline
\end{tabular}
\vspace{0.3cm}
\caption{A few values for $\#\mathcal{F}(g, q, 4)$}
\label{gq4}
\end{table}
}

Next, we present Table \ref{tabela3}, which is constructed using the formulas given in Theorems \ref{Fgq3} and \ref{Fgq4} and Example \ref{upper_casos}.

\begin{table}[h]
\centering
\begin{tabular}{|c|c|c|c|c|c|}
  \hline
   $g$ & $n_g$ & UB Ex. \ref{upper_casos} ($M=4$) & UB Ex. \ref{upper_casos} ($M=3$)  &  UB Ex. \ref{upper_casos} ($M=2$)  & UB Cor. \ref{cotan_g}  \\
     \hline                               
     1 & 1 & 2 & 2 & 2 & 1 \\
     \hline
     2 & 2 & 3 & 2 & 3 & 2 \\
     \hline
     3 & 4 & 6 & 4 & 4 & 4 \\
     \hline
     4 & 7 & 8 & 7 & 8 & 8 \\
     \hline
     5 & 12 & 12 & 14 & 16 & 16 \\
     \hline
     6 & 23 & 28 & 27 & 30 & 32 \\
     \hline
     7 & 39 & 50 & 58 & 62 & 64 \\
     \hline
     8 & 67 & 112 & 111 & 126 & 128 \\
     \hline
     9 & 118 & 216 & 239 & 249 & 256 \\
     \hline
     10 & 204 & 413 & 468 & 505 & 512 \\
     \hline
     \end{tabular}
   \vspace{0.3cm}
   \caption{Upper bounds (UB) for $n_g$ using Example \ref{upper_casos} and Corollary \ref{cotan_g}}
   \label{tabela3}
\end{table}

To end up, we provide closed formulas for $\#\mathcal{F}(g,q)$, for some values of $q$, when $g$ is fixed, which are illustrated in Table \ref{g,q}.

\pagebreak

\begin{theorem}
Let $g$ be a positive integer. Then

$\begin{array}{lll}
(1) &  \#\mathcal{F}(g,1) = 1 & \\ 
(2) & \#\mathcal{F}(g,2) = \F_{g+1} - 1 & \\
(3) & \#\mathcal{F}(g,q) = \lfloor \frac{g + 2q}{3} \rfloor + 2, & \text{ if } \frac{2g+4}{5} < q \leq \frac{g-1}{2} \text{ and }  g \geq 23 \\
(4) & \#\mathcal{F}(g,\frac{g}{2}) = \left \lfloor \frac{2g}{3} \right \rfloor + 2, & \text{ if } g \equiv 0 \pmod{2} \text{ and } g \geq 8 \\
(5) & \#\mathcal{F}(g,\frac{g + 1}{2}) = \left \lfloor \frac{g}{3} \right \rfloor + 2, & \text{ if } g \equiv 1 \pmod{2}, g \geq 5 \\
(6) & \#\mathcal{F}(g,q) = 2, & \text{ if } \frac{g+2}{2} \leq q \leq \frac{2g}{3} \\
(7) & \#\mathcal{F}(g,\frac{2g + 1}{3}) = 1, & \text{ if } g \equiv 1 \pmod{3}, g \geq 4 \\
(8) & \#\mathcal{F}(g,\frac{2g + 2}{3}) = 0, & \text{ if } g \equiv 2 \pmod{3}, g \geq 5 \\
(9) & \#\mathcal{F}(g,q) = 0, & \text{ if } \left \lceil \frac{2g}{3} \right \rceil + 1 \leq q \leq g - 1 \\
(10) & \#\mathcal{F}(g,g) = 1 & \\
(11) & \#\mathcal{F}(g,q) = 0, & \text{ if } q > g
\end{array}$
\label{valores}
\end{theorem}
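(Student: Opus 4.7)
The plan is to decompose $\#\mathcal{F}(g,q)$ by multiplicity, writing
\[\#\mathcal{F}(g,q) = \sum_{m \geq 2} \#\mathcal{F}(g,q,m),\]
and to restrict the sum via Corollary \ref{mgq} to multiplicities $m$ satisfying $\lceil g/(m-1) \rceil \leq q \leq \lceil 2g/m \rceil$. The explicit contributions at $m = 3$ and $m = 4$ will be read off from Theorems \ref{Fgq3} and \ref{Fgq4}, and I will argue separately that $m \geq 5$ contributes nothing in each regime.

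First I would dispose of the easy items. Items (1), (10), (11) are immediate: the ordinary gapset $[1,g]$ is the unique gapset of depth $1$, the hyperelliptic gapset is the unique one of depth $g$, and Proposition \ref{dep} prevents $q > g$. Item (2) follows by subtracting the ordinary gapset from the count in Proposition \ref{q=2}, giving $\F_{g+1} - 1$. Item (9) is precisely Corollary \ref{zeros}.

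For items (3)--(8) my approach is uniform: identify the admissible multiplicities using Corollary \ref{mgq} and then substitute from Theorems \ref{Fgq3}--\ref{Fgq4}. In item (3), $q \leq (g-1)/2 < g/2$ eliminates $m = 3$ by Theorem \ref{Fgq3}, and $q > (2g+4)/5 \geq \lceil 2g/5 \rceil$ eliminates $m \geq 5$ by Proposition \ref{m}, leaving only $m = 4$, which contributes exactly $\lfloor(g+2q)/3\rfloor + 2$. In item (6), $q \geq (g+2)/2 > g/2$ gives $\lceil 2g/m \rceil < q$ for $m \geq 4$, while the hyperelliptic ($m=2$) has depth $g$ outside the stated range, so only $m = 3$ contributes, and Theorem \ref{Fgq3} returns $2$. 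Items (7) and (8) are analogous, yielding $1$ and $0$ respectively. For items (4) and (5) both $m = 3$ and $m = 4$ are admissible, and summing the corresponding entries of Theorems \ref{Fgq3} and \ref{Fgq4} gives $1 + (\lfloor 2g/3\rfloor + 1) = \lfloor 2g/3\rfloor + 2$ and $2 + \lfloor g/3\rfloor$ respectively.

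The main obstacle will be ruling out $m \geq 5$ in items (4) and (5). For odd $g \geq 5$ with $q = (g+1)/2$ and $m \geq 5$, the inequality $q \leq \lceil 2g/m \rceil \leq (2g+4)/5$ forces $g \leq 3$, a contradiction. For even $g$ with $q = g/2$, the same computation yields $g \leq 8$, so only the borderline case $g = 8, q = 4, m = 5$ is delicate. I plan to settle it by enumerating all tuples $(k_1, k_2, k_3, k_4) \in \N^4$ of sum $8$ with maximum $4$ and checking the Kunz system (\ref{system}) for $m = 5$; a short case analysis based on which coordinate equals $4$ shows that some inequality of the system always fails, so $\#\mathcal{F}(8,4,5) = 0$. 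Summing contributions then delivers the announced formulas.
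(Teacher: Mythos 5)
Your proposal is correct and takes essentially the same route as the paper: the published proof simply derives items (1), (2), (9)--(11) from the elementary facts (ordinary/hyperelliptic gapsets, Proposition \ref{dep}, Proposition \ref{q=2}, Corollary \ref{zeros}) and items (3)--(8) by summing the relevant entries of Theorems \ref{Fgq3} and \ref{Fgq4}, exactly as you do. Your explicit elimination of multiplicities $m \geq 5$ via Corollary \ref{mgq}, including the borderline check that $\#\mathcal{F}(8,4,5)=0$, merely spells out details the paper leaves implicit.
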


\begin{proof}
Items (1) and (10) follow from the facts that the only gapsets that have depth equals one are the ordinary gapsets and the only gapsets that have depth equals to the genus are the hyperelliptic gapsets, respectively. Item (11) follows from Proposition \ref{dep}. Item (2) follows from Proposition \ref{q=2}. Item (3) follows from Theorem \ref{Fgq4}. Items (4) and (5) follow from Theorems \ref{Fgq3} and \ref{Fgq4}. Items (6), (7) and (8) follow from Theorem \ref{Fgq3}. Item (9) follows from Corollary \ref{zeros}.

\end{proof}

\setlength{\tabcolsep}{0.22em} 
{\renewcommand{\arraystretch}{1.4}
\begin{table}
\begin{tabular}{|c|c c c c || c || c c c c c c c c c c c c c c c| c|}
 \hline
\diagbox[height=1.1cm]{$g$}{$q$} & $0$ & $1$ & $2$ & $3$ & $n'_g$ &$4$ & $5$ & $6$ & $7$ & 8 & 9 & 10 & 11 & 12 & 13 & 14 & 15 & 16 & 17 & 18 & $n_g$ \\
\hline
0 & \textbf{1} &  &  &  & 1 &  &  &  &  &  &  &  &  &  &  &  &  &  &  &  & 1 \\ 
\hline
1 &  & \textbf{1} &  &  & 1 &  &  &  &  &  &  &  &  &  &  &  &  &  &  &  & 1 \\
\hline
2 &  & \textbf{1} & \textbf{1} &  & 2 &  &  &  &  &  &  &  &  &  &  &  &  &  &  &  & 2 \\
\hline
3 &  & \textbf{1} & \textbf{2} & \textbf{1}  & 4 &  &  &  &  &  &  &  &  &  &  &  &  &  &  &  & 4 \\
\hline
4 &  & \textbf{1} & \textbf{4} & \textbf{1} & 6 & \textbf{1} &  &  &  &  &  &  &  &  &  &  &  &  &  &  & 7 \\
\hline
5 &  & \textbf{1} & \textbf{7} & \textbf{3} & 11 & \textbf{0} & \textbf{1} &  &  &  &  &  &  &  &  &  &  &  &  &  & 12 \\
\hline
6 &  & \textbf{1} & \textbf{12} & 7 & 20 & \textbf{2} & \textbf{0} & \textbf{1} &  &  &  &  &  &  &  &  &  &  &  &  & 23 \\
\hline
7 &  & \textbf{1} & \textbf{20} & 12 & 33 & \textbf{4} & \textbf{1} & \textbf{0} & \textbf{1} &  &  &  &  &  &  &  &  &  &  &  & 39 \\
\hline
8 &  & \textbf{1} & \textbf{33} & 23 & 57 & \textbf{7} & \textbf{2} & \textbf{0} & \textbf{0} & \textbf{1} &  &  &  &  &  &  &  &  &  &  & 67 \\
\hline
9 &  & \textbf{1} & \textbf{54} & 44 & 99 & 11 & \textbf{5} & \textbf{2} & \textbf{0} & \textbf{0} & \textbf{1} &  &  &  &  &  &  &  &  &  & 118 \\
\hline
10 &  & \textbf{1} & \textbf{88} & 79 & 168 & 24 & \textbf{8} & \textbf{2} & \textbf{1} & \textbf{0} & \textbf{0} & \textbf{1} &  &  &  &  &  &  &  &  & 204 \\
\hline
11 &  & \textbf{1} & \textbf{143} & 143 & 287 & 37 & 11 & \textbf{5} & \textbf{2} & \textbf{0} & \textbf{0} & \textbf{0} & \textbf{1} &  &  &  &  &  &  &  & 343 \\
\hline
12 &  & \textbf{1} & \textbf{232} & 254 & 487 & 71 & 19 & \textbf{10} & \textbf{2} & \textbf{2} & \textbf{0} &  \textbf{0} & \textbf{0} & \textbf{1} &  &  &  &  &  &  & 592 \\
\hline
13 &  & \textbf{1} & \textbf{376} & 447 & 824 & 124 & 33 & 10 & \textbf{6} & \textbf{2} & \textbf{1} & \textbf{0} & \textbf{0} & \textbf{0} & \textbf{1} &  &  &   &  &   & 1001 \\
\hline
14 &  & \textbf{1} & \textbf{609} & 785 & 1395 & 209 & 57 & 16 & \textbf{11} & \textbf{2} & \textbf{2} & \textbf{0} & \textbf{0} & \textbf{0} &  \textbf{0} & \textbf{1} &  &  &  &  & 1693 \\
\hline
15 &  & \textbf{1} & \textbf{986} & 1364 & 2351 & 353 & 104 & 26 & 11 & \textbf{7} & \textbf{2} & \textbf{2} & \textbf{0} & \textbf{0} & \textbf{0} & \textbf{0} & \textbf{1} &  &  &  & 2857 \\
\hline
16 &  & \textbf{1} & \textbf{1596} & 2357 & 3954 & 612 & 158 & 48 & 16 & \textbf{12} & \textbf{2} & \textbf{2} & \textbf{1} & \textbf{0} & \textbf{0} & \textbf{0} & \textbf{0} & \textbf{1} &  &  & 4806 \\
\hline
17 &  & \textbf{1} & \textbf{2583} & 4052 & 6636 & 1028 & 254 & 79 & 23 & 13 & \textbf{7} & \textbf{2} & \textbf{2} & \textbf{0} & \textbf{0} & \textbf{0} & \textbf{0} & \textbf{0} & \textbf{1} &  & 8045 \\
\hline
18 &  & \textbf{1} & \textbf{4180} & 6935 & 11116 & 1739 & 409 & 132 & 37 & 13 & \textbf{14} & \textbf{2} & \textbf{2} & \textbf{2} & \textbf{0} & \textbf{0} & \textbf{0} & \textbf{0} & \textbf{0} & \textbf{1} & 13467 \\
\hline
\end{tabular}
\vspace{0.3cm}
\caption{A few values for $\#\mathcal{F}(g, q)$. The entries in bold correspond to values that can be obtained using Theorem \ref{valores}}
\label{g,q}
\end{table}
}

\section{Further questions and concluding remarks}

In sections 3 and 4, we considered $m$-extensions and we observed that there is a bijective map between the set of all $m$-extensions of genus $g$ and $\N^{m-1}$, by considering the pseudo Kunz coordinates. In future works, it could be of interest investigating properties of $m$-extensions. For instance, one can show that if $A$ is an $m$-extension of genus $g$ and conductor $c$, then $g+1 \leq c \leq \left \lfloor \frac{(g+2)^2}{4} \right \rfloor$, which is a property that is not shared by gapsets. In Remark \ref{BA_mext}, we proved a version of Bras Amorós' conjecture. Natural questions that arise are how could versions of the central problems in numerical semigroup theory (Wilf's conjecture, Frobenius' problem, etc.) be formulated and how they can be proved for some families of $m$-extensions or disproved.

In section 5, we obtained a lower bound for the number $n'_g$, by considering $t$-uples that return Kunz coordinates of gapsets with depth at most 3. An interesting question is how could we construct other families of gapsets of depth at most $3$, in order to obtain a better lower bound for $n'_g$ involving well known sequences.

In section 6, we obtained a sequence of upper bounds for $n_g$. A natural question is how can we improve them, by refining the presented method or combining it with some other methods. We also expect to improve the hypothesis $g \gg 0$ of the Corollary \ref{cor610}.

In section 7, we provided closed formulas for $
\#\mathcal{F}(g,q,3), \#\mathcal{F}(g,q,4)$ and for $\#\mathcal{F}(g,q)$, in some particular cases. One could ask for formulas for $\#\mathcal{F}(g,q,m)$, for $m > 4$ and for $\#\mathcal{F}(g,q)$ for arbitrary $g$ and $q$.


\end{document}